\documentclass[12pt,reqno]{amsart}

\usepackage[T1]{fontenc}
\usepackage[latin1]{inputenc}
\usepackage[english]{babel}

\usepackage{tikz-cd}
\usepackage[all]{xy}
\usepackage{hyperref}
\usepackage{amsmath,amssymb}
\usepackage{amsthm}
\usepackage{thmtools}

\newtheorem{theorem}{Theorem}[section]
\newtheorem{lemma}[theorem]{Lemma}
\newtheorem{coro}[theorem]{Corollary}
\newtheorem{prop}[theorem]{Proposition}
\theoremstyle{definition}

\newtheorem{example}[theorem]{Example}
\newtheorem{rem}[theorem]{Remark}

\newcommand{\cC}{\ensuremath{\mathcal{C}}}
\newcommand{\C}{\mathcal{C}}

\newcommand{\E}{\mathcal{E}}

\newcommand{\farf}{\mathrel{\text{\scalebox{1}{$\Join$}}}}
\newcommand{\RG}{\mathbf{RG}}

\newcommand{\Pt}{\mathbf{Pt}}

\newcommand{\Ext}{\mathbf{Ext}}
\newcommand{\CExt}{\mathbf{CExt}}
\newcommand{\Lie}{\mathbf{Lie}}
\newcommand{\XMod}{\mathbf{XMod}}
\newcommand{\PXMod}{\mathbf{PXMod}}
\newcommand{\PXB}{\mathbf{PXMod}_{B} (\mathcal C)}
\newcommand{\XB}{\mathbf{XMod}_{B} (\mathcal C)}
\newcommand{\RGB}{\mathbf{RG}_{B}(\mathcal C)}
\newcommand{\GPD}{\mathbf{Grpd}_{B}(\mathcal C)}
\newcommand{\SH}{\ensuremath{\mathrm{(SH)}}}
\newcommand{\UA}{\ensuremath{\mathrm{(UA)}}}
\newcommand{\Ker}{\ensuremath{\mathrm{Ker}}}
\newcommand{\PX}{\ensuremath{{_\mathsf{PX}}}}

\newdir{|>}{{}*!/8pt/@{|}*!/3.5pt/:(1,-.2)@^{>}*!/3.5pt/:(1,+.2)@_{>}}
\newdir{ >}{{}*!/-10pt/@{>}}
\newdir{ |>}{!/10pt/{}*!/4.5pt/@{|}*:(1,-.2)@^{>}*:(1,+.2)@_{>}}

\title{Galois theory and the categorical Peiffer commutator}
\author{A. S. Cigoli, A. Duvieusart, M. Gran and S. Mantovani}
\thanks{The second author is a Research Fellow of the Fonds de la Recherche Scientifique-FNRS}
\date{\today}

\begin{document}
\begin{abstract}
We show that the Peiffer commutator previously defined by Cigoli, Mantovani and Metere can be used to characterize central extensions of precrossed modules with respect to the subcategory of crossed modules in any semi-abelian category satisfying an additional property. We prove that this commutator also characterizes double central extensions, obtaining then some Hopf formulas for the second and third homology objects of internal precrossed modules.
\end{abstract}

\maketitle
\section{Introduction and preliminaries} \label{sec1}
Let $\C$ be a semi-abelian category \cite{JMT} satisfying the ``\emph{Smith is Huq}'' condition, denoted by $\SH$ \cite{BG, NT} in the following. This condition means that two notions of centrality coincide: the first one is the notion of centrality for equivalence relations (in particular, of congruences in varieties of universal algebras) \cite{Smith, Ped}  and the second one is the centrality (often referred to as commutativity) of the corresponding normal subobjects (in particular, of normal subalgebras) \cite{Huq}. Thanks to this coincidence, in the present article we mainly work with the latter notion, that we are now going to recall. In $\mathcal C$, two subobjects $m \colon M \rightarrow A$ and $n \colon N \rightarrow A$ of the same object $A$ \emph{commute in the sense of Huq} if there is an arrow $c \colon M \times N \rightarrow A$ making the diagram
$$
\xymatrix{ M \ar[r]^-{(1,0) } \ar[dr]_m & M \times N \ar@{-->}[d]^{c} & N \ar[l]_-{(0, 1)} \ar[dl]^n \\
&A  & }
$$
commute. When this is the case, such an arrow $c$ is unique, and it is called the \emph{cooperator} of $m$ and $n$ \cite{Bourn2002}. With a slight abuse of notation we write $[M, N]_{Huq}^A = 0$ in this case, without explicitly mentioning the morphisms $m$ and $n$, or simply $[M, N]_{Huq} = 0$.
Given any two normal subobjects $m \colon M \rightarrow A$ and $n \colon N \rightarrow A$ as above, there is in $\mathcal C$ \cite{Bourn2004} a \emph{smallest normal subobject} $L$ of $A$ such that in the quotient $A/L$ the regular images $q(M)$ and $q(N)$ along $q \colon A \rightarrow A/L$ commute: $$[q(M), q(N)]_{Huq}^{A/L}= 0.$$
Such a subobject is usually denoted by $[M, N]_{Huq}^A$; moreover, $[M, N]_{Huq}^A$ is the trivial subobject $0 \rightarrow A$ of $A$ if and only if $m$ and $n$ commute in the sense of Huq, so that the notations are consistent.

Since the condition $\SH$ holds in $\mathcal C$, a reflexive graph 
\begin{equation}\label{reflexive}
\xymatrix{
X_1\ar@<5pt>[r]^-{d} \ar@<-5pt>[r]_-{c} & B \ar[l]|-{e}
}\end{equation}
(with $d \cdot  e = 1_{{B}}= c \cdot e$) is an internal groupoid if and only if the kernels $\ker(d) \colon K[d] \rightarrow X_1$ and $\ker(c) \colon K[c] \rightarrow X_1$ of the ``domain'' $d$ and of the ``codomain'' $c$ have trivial Huq commutator: $[K[d], K[c]]_{Huq}^{X_1}= 0$ (see \cite{Ped, NT}).
One writes $\RGB$ for the category of reflexive graphs in $\mathcal C$ over a fixed ``object of objects'' $B$, with morphisms  those $f_1 \colon X_1 \rightarrow Y_1$ in $\mathcal C$ such that in the diagram
\begin{equation}\label{morphism}
\xymatrix{
X_1 \ar@<1ex>[dr]^{c} \ar@<-1ex>[dr]_{d} \ar[rr]^{f_1} & & Y_1 \ar@<1ex>[dl]^{c'} \ar@<-1ex>[dl]_{d'} \\
& B \ar[ul]|e \ar[ur]|{e'}}
\end{equation}
the obvious triangles commute. Since $\mathcal C$ is semi-abelian, the category $\RGB$ is also \emph{exact} \cite{Barr}, with regular epimorphisms those morphisms such that $f_1$ in \eqref{morphism} is a regular epimorphism in $\mathcal C$, and \emph{protomodular} \cite{Bourn1991}. This category $\RGB$ is not pointed, but \emph{quasi-pointed} \cite{Bourn2001}, in the sense that it has an initial object $(B,1_B, 1_B, 1_B)$, a terminal object $(B \times B, p_1, p_2, ( 1_B, 1_B ) )$ and, moreover, the canonical arrow from the initial to the terminal object is a monomorphism. 

The category $\RGB$ is known to be equivalent to the category $\PXB$ of (internal) \emph{precrossed modules} \cite{J03} over a fixed object $B$, also studied in \cite{MM, CMM17}. 

The normalization functor $N \colon \RGB \rightarrow \PXB$ giving this category equivalence associates, with any reflexive graph \eqref{reflexive}, the precrossed module
$(\partial\colon X  \to B,\xi)$, where $\partial = c \cdot \ker (d)$, $X = K[d]$, and $\xi \colon B \flat X \rightarrow X$ is the \emph{internal action} (in the sense of \cite{BJK}, see the next section for details) given by the conjugation of $B$ on $X$, computed in $X_1$. Note that, by definition, the action $\xi$ of a precrossed module $(\partial\colon X \to B,\xi)$ makes the diagram 
$$
\xymatrix{B \flat X \ar[r]^{1 \flat \partial} \ar[d]_\xi& \ar[d] B \flat B \ar[d]^{\chi} \\
X \ar[r]_\partial & B
}
$$
commute, with $\chi$ the conjugation action of $B$ on itself. For instance, in the case of groups, the commutativity of this diagram expresses, internally, the precrossed module condition $\partial ({\, }^b x ) = bxb^{-1}$.

The normalization functor $N \colon \RGB \rightarrow \PXB$ takes a morphism \eqref{morphism} to the morphism
\begin{equation}\label{normalized}
\xymatrix{ X  \ar[rr]^-{f} \ar[dr]_{\partial = c \cdot \ker(d)}&   & Y  \ar[dl]^{\partial' = c' \cdot \ker (d') } \\
&B & }
\end{equation}
where $f$ is the restriction of $f_1$ to the kernels $X$ and $Y$ of $d$ and of $d'$, respectively, whence $\partial' \cdot f = \partial$. From the point of view of the actions, $f$ is \emph{equivariant} with respect to the $B$-actions, in the sense that the following diagram commutes:
$$
\xymatrix{
B \flat X \ar[r]^{1 \flat f} \ar[d]_{\xi}& \ar[d] B \flat Y \ar[d]^{\xi'} \\
X \ar[r]_f & Y,
}
$$
so that $f \colon (\partial\colon X \to B,\xi) \rightarrow (\partial' \colon Y \to B,{\xi'})$ is a precrossed module morphism.

By the definition of internal crossed module given in \cite{J03} the category equivalence $\RGB \cong \PXB$ restricts to an equivalence between the category $\GPD$ of \emph{internal groupoids} in $\mathcal C$ over $B$ 
and the category $\XB$ of \emph{internal crossed modules} over $B$.
The condition $\SH$ in $\mathcal C$ means precisely that a precrossed module $(\partial\colon X \to B,\xi)$ is a \emph{crossed module} if and only if the following diagram  
$$
\xymatrix{X \ar[d]_{\chi} \flat X \ar[r]^{\partial \flat 1} & B \flat X \ar[d]^{\xi} \\
X \ar@{=}[r]_{1_X}& X
}
$$
commutes (see \cite{MM,NT}). 

The category $\GPD$ is a full reflective subcategory of the category $\RGB$:
\begin{equation}\label{adjunction-RG}
\xymatrix@=30pt{
{ \RGB  } \ar@<1ex>[r]_-{^{\perp}}^-{F} & {\, \GPD.\, }
\ar@<1ex>[l]^-U  }
 \end{equation}
Under our assumptions on $\mathcal C$, the $(X_1,d,c,e)$-component of the unit of this adjunction is given by the quotient 
\begin{equation}\label{reflection-RG}
 \xymatrix@=25pt{
	X_1 \ar@<-1,ex>[d]_{d}\ar@<+1,ex>[d]^{c} \ar@{->>} [rr]^-{\eta_{X_1}}
	& & {\frac{X_1}{[K[d],K[c]]_{Huq}}\, } \ar@<-1,ex>[d]_{\overline{d}}\ar@<+1,ex>[d]^{\overline{c}}   \\
	B  \ar@{=}[rr]_{} \ar[u]|-{e}& &{B\, } \ar[u]|-{\overline{e}} 
}
\end{equation}
where ${[K[d],K[c]]_{Huq}}$ is the Huq commutator in $X_1$ of the kernels of $d$ and $c$. Thanks to the category equivalences recalled above, one knows that $\XB$ is a reflective subcategory of $\PXB$:
\begin{equation}\label{adjunction-PX}
\xymatrix@=30pt{
{ \PXB  } \ar@<1ex>[r]_-{^{\perp}}^-{G} & {\, \XB.\, }
\ar@<1ex>[l]^-V  }
 \end{equation}

A categorical notion of Peiffer commutator was introduced in \cite{CMM17} (see the next section), and the reflection of the precrossed $B$-module $(\partial\colon X \to B,\xi)$ associated with the reflexive graph $(X_1,d,c,e)$ was shown to be the quotient $\eta_X \colon X \rightarrow \frac{X}{\langle X,X \rangle} $ of $X$ by the \emph{Peiffer commutator} $\langle X, X \rangle$ on $X$
\begin{equation}\label{Reflection-PX}
\xymatrix{ X  \ar@{->>}[rr]^-{\eta_X} \ar[dr]_{\partial}&   & \frac{X}{\langle X,X \rangle}  \ar[dl]^{\overline{\partial}} \\
&B & }
\end{equation}
where the $B$-action $\overline{\xi}$ on $\frac{X}{\langle X,X \rangle} $ is the one induced by the $B$-action $\xi$ on $X$.

The correspondence between the Peiffer commutator $\langle X,X \rangle$ on $X$ in \eqref{Reflection-PX} and the Huq commutator $[K[d],K[c]]_{Huq}$ in the reflection \eqref{reflection-RG} raises the question of determining whether this is a special case of a more general fact relating centrality conditions coming from categorical Galois theory \cite{JK} to this Peiffer commutator (in a context where they are both defined and can then be compared). The interest for this question also comes from a recent result in Galois theory that we now briefly explain. 

A characterization of the \emph{extensions} in $\RGB$ that are \emph{central} with respect to the adjunction \eqref{adjunction-RG} was established in \cite{DG}, in the general context of exact Mal'tsev categories, i.e.~in exact categories where any reflexive relation is an equivalence relation \cite{CLP}. 
Recall that a \emph{Birkhoff subcategory} is simply a full regular epi-reflective subcategory $\mathcal X$ of a category  $\mathcal A$ 
\begin{equation}\label{adjunction-JK}
\xymatrix@=30pt{
{ \mathcal A } \ar@<1ex>[r]_-{^{\perp}}^-{G} & {\, {\mathcal X}\, }
\ar@<1ex>[l]^-V  }
 \end{equation}
that is stable in $\mathcal A$ under regular quotients. As explained in \cite{JK}, when $\mathcal A$ is an exact Mal'tsev category, a Birkhoff subcategory $\mathcal X$ of $\mathcal A$ always induces an \emph{admissible} Galois structure, for which there is a classification theorem of the extensions that are $\mathcal X$-central, in a sense that we are now going to recall. 
An extension $f \colon  \xymatrix{X \ar@{->>}[r] & Y }$ in $\mathcal A$ is called an $\mathcal X$-\emph{trivial extension} when the naturality square 
$$\xymatrix{X \ar@{->>}[r]^{f} \ar@{->>}[d]_{\eta_X}& Y \ar@{->>}[d]^{\eta_Y} \\  VG(X) \ar@{->>}[r]_{VG(f) } & VG(Y)} $$
induced by the unit $\eta$ of the adjunction \eqref{adjunction-JK} is a pullback. The notion of ${\mathcal X}$-central extension is then defined as an extension in $\mathcal A$ that is \emph{locally $\mathcal X$-trivial}, in the sense that it is ${\mathcal X}$-trivial up to the pullback in $\mathcal A$ along a regular epimorphism (= an effective descent morphism, in this context \cite{JT}). 
In other words, a regular epimorphism $f \colon  \xymatrix{X \ar@{->>}[r] & Y }$ in $\mathcal A$ is called an $\mathcal X$-\emph{central extension}
 if there is a regular epimorphism $p \colon  \xymatrix{Z \ar@{->>}[r] & Y }$ in $\mathcal A$ such that the projection $p_1$ in the pullback 
$$\xymatrix{ Z \times_Y X \ar@{->>}[r]^-{p_2} \ar@{->>}[d]_{p_1}& X \ar@{->>}[d]^{f} \\  Z \ar@{->>}[r]_{p} & Y} $$
is an $\mathcal X$-trivial extension. In particular, $f$ is called an $\mathcal X$-\emph{normal extension} if in the above diagram we can take $p=f$. We recall from \cite{JK} that, when $\mathcal A$ is Mal'tsev, every central extension is normal.

When $\mathcal C$ is exact Mal'tsev (as it follows from our assumptions), the category $\mathcal A = \RGB$ is again exact Mal'tsev, so that it is natural to investigate which are the \emph{extensions} \begin{equation}
\xymatrix@=25pt{
X_1 \ar@<1ex>[dr]^{c} \ar@<-1ex>[dr]_{d} \ar@{->>}[rr]^{f_1} & & Y_1 \ar@<1ex>[dl]^{c'} \ar@<-1ex>[dl]_{d'} \\
& B \ar[ul]|e \ar[ur]|{e'}}
\end{equation}
in $\RGB$ that are $\GPD$-\emph{central}, namely central with respect to the Birkhoff reflection \eqref{adjunction-RG}. As shown in \cite{DG} (by extending a result in \cite{EG06}), it turns out that this is the case if and only if the following Smith \emph{centrality condition} holds:
\begin{equation} \label{centralRG}
[Eq[f_1],Eq[c]\vee Eq[d]]_{Smith}=\Delta_{X_1}.
\end{equation}
Here $Eq[c]\vee Eq[d]$ is the supremum of the equivalence relations $Eq[c]$ and $Eq[d]$ that are the kernel pairs of the morphisms $c$ and $d$, respectively, while $\Delta_{X_1}$ is the discrete relation on $X_1$.
The results in \cite{BG3, Ped} imply that this condition is equivalent to the following ones:
\begin{equation}\label{CC}  
[Eq[f_1],Eq[c]]_{Smith} = \Delta_{X_1}, \quad  \mathrm{and} \quad [Eq[f_1], Eq[d]]_{Smith}={\Delta_{X_1}.}
\end{equation}
When we look at conditions \eqref{CC} in terms of the \emph{Huq centrality}, thanks to the $\SH$ condition, we can express them as follows:
\begin{equation} \label{char.centr}
[K [f_1], K[c] ]_{Huq} = 0 \quad  \mathrm{and} \quad  [K [f_1], K[d] ]_{Huq}=0.
\end{equation}
In the next section, after recalling some useful definitions, we shall see that, under suitable assumptions on the base category $\mathcal C$, these conditions are equivalent to asking that the Peiffer commutator $\langle K[f] , X \rangle$ is trivial, where $f$ is the extension in $\PXB$ corresponding to $f_1$ via the normalization functor.

In the third section we shall use this characterization and a result in \cite{EG07} to get a five term exact sequence in homology (Proposition \ref{SS}), where the homology objects in $\PXB$ are expressed in terms of generalized Hopf formulas. When $\mathcal C$ is the category of Lie algebras, one obtains an exact sequence in the category of Lie algebra precrossed modules (see Remark \ref{Lie}). In the last section a characterization of ``double central extensions'' relative to the induced adjunctions between the categories of extensions and of central extensions in $\PXB$ will also be established (Theorem \ref{double-central}). From this, an explicit Hopf formula describing the Galois group of a weakly universal double central extension will be deduced (see formula (\ref{double-Hopf})).

\section{Main result}

The notions of internal precrossed and crossed module are based on \emph{internal actions} \cite{BJK}. For each object $B$ in a semi-abelian category \cC, one can consider the category $\Pt_B(\cC)$ of \emph{points} over $B$, whose objects are pairs $(p,s)$ of arrows in \cC\ with $ps=1_B$, and whose morphisms are triangles
\[
\xymatrix{
A \ar@<.5ex>[dr]^p \ar[rr]^f & & A' \ar@<.5ex>[dl]^{p'} \\
& B \ar@<.5ex>[ul]^s \ar@<.5ex>[ur]^{s'}
}
\]
where $fs=s'$ and $p'f=p$. The functor
\[
\Ker_B\colon \Pt_B(\cC)\to \cC,
\]
sending each point $(p,s)$ over $B$ to the kernel of $p$, and a map $f$ to its restriction to the kernels, has a left adjoint sending each object $X$ in \cC\ to the point
\[
\xymatrix{
B+X \ar@<.5ex>[r]^-{[1,0]} & B \ar@<.5ex>[l]^-{\iota_B}.
}
\]
The kernel of $[1,0]$ is usually denoted by $B\flat X$, and $B\flat(-)\colon\cC\to\cC$ is the underlying functor of the monad on \cC\ associated with the adjunction above. Internal $B$-actions are defined as the algebras for the monad $B\flat(-)$. In the semi-abelian context, the functor $\Ker_B$ is monadic, and there is then an equivalence
\[
\cC^{B\flat(-)}\simeq \Pt_B(\cC)
\]
between $B$-actions and points over $B$. In other words, \cC\ has \emph{semi-direct products} in the sense of \cite{BJ}. Explicitly, each point $(p,s)$ over $B$ determines a $B$-action $\xi$ given by the (unique) leftmost vertical arrow in the commutative diagram
\[
\xymatrix@C=6ex{
B\flat X \ar@{ |>->}[r]^-{\ker[1,0]} \ar[d]_{\xi} & B+X \ar@<.5ex>[r]^-{[1,0]} \ar[d]_{[s,\ker(p)]} & B \ar@<.5ex>[l]^-{\iota_B} \ar@{=}[d] \\
X \ar@{ |>->}[r]_-{\ker(p)} & A \ar@<.5ex>[r]^{p} & B. \ar@<.5ex>[l]^{s}
}
\]
If \cC\ is the category of groups, the group $B\flat X$ is generated as a subgroup of $B+X$ by the strings of the form $(b;x;b^{-1})$ with $b$ in $B$ and $x$ in $X$, and $\xi$ maps such generator to the element $s(b)xs(b)^{-1}$ of $X$, i.e.\ $\xi$ realizes internally the conjugation action of $B$ on $X$ inside $A$. Conversely, each internal action $\xi$ determines a point as in the right hand side of the bottom row of the diagram
\[
\xymatrix@C=6ex{
B\flat X \ar@{ |>->}[r]^-{\ker[1,0]} \ar[d]_{\xi} & B+X \ar@<.5ex>[r]^-{[1,0]} \ar[d]_{[i_B,j_X]} & B \ar@<.5ex>[l]^-{\iota_B} \ar@{=}[d] \\
X \ar@{ |>->}[r]_-{j_X} & X \rtimes_{\xi} B \ar@<.5ex>[r]^-{p_B} & B, \ar@<.5ex>[l]^-{i_B}
}
\]
where the left hand square is a pushout (notice that, by monadicity, $j_X$ is indeed the kernel of $p_B$). Again, in the category of groups, $X\rtimes_\xi B$ is the classical semi-direct product of groups.

Three special cases of internal actions deserve to be described:
\begin{itemize}
\item the \emph{trivial action} of $B$ on $X$, given by  the composite
\[
\tau \colon
\xymatrix@C=5ex{
B \flat X \ar@{ |>->}[r]^-{\ker[1,0]} & B+X \ar[r]^-{[0,1]} & X,
}
\]
and corresponding to the point
\[
\xymatrix{
B \times X \ar@<.5ex>[r]^-{p_1} & B; \ar@<.5ex>[l]^-{(1,0)}
}
\]
\item for a normal subobject $k\colon\xymatrix{K \ar@{ |>->}[r] & X}$, the \emph{conjugation action} of $X$ on $K$, given by the (unique) left vertical arrow in the commutative diagram
\[
\xymatrix{
X \flat K \ar[d]_{\chi^X_K} \ar@{ |>->}[r]^{\ker[1,0]} & X+K \ar[d]^{[1,k]} \\
K \ar@{ |>->}[r]_{k} & X,
}
\]
and corresponding to the point
\[
\xymatrix{
R \ar@<.5ex>[r]^-{p_1} & X, \ar@<.5ex>[l]^-{(1,1)}
}
\]
where $R$ is the equivalence relation on $X$ associated with $K$ (as a special case, we shall simply denote by $\chi\colon X\flat X \to X$ the conjugation action of $X$ on itself induced by the indiscrete relation);
\item for each action $\xi\colon B\flat X \to X$ and each morphism $f\colon A \to B$, the \emph{pullback action}, given by the composite
\[
f^*(\xi) \colon
\xymatrix{
A \flat X \ar[r]^-{f \flat 1_X} & B \flat X \ar[r]^-{\xi} & X,
}
\]
and corresponding to the upper point in the pullback diagram
\[
\xymatrix@C=6ex{
(X \rtimes_\xi B)\times_B A \ar@<.5ex>[r]^-{p_2} \ar[d]_{p_1} & A \ar[d]^{f} \ar@<.5ex>[l]^-{(i_Bf,1)} \\
X \rtimes_\xi B \ar@<.5ex>[r]^-{p_B} & B \ar@<.5ex>[l]^-{i_B}.
}
\]

\end{itemize}

The \emph{Peiffer product} of two precrossed $B$-modules $(\partial\colon X \to B,\xi)$ and $(\partial'\colon Y \to B, \xi')$ in \cC\ was introduced in \cite{CMM17} and can be defined as the object in the bottom right corner of the diagram
\begin{equation} \label{diag:farf}
\xymatrix@C=9ex{
    X +_\PX Y \ar[r]^-{[j_X,i_Y]_\PX} \ar[d]_{[i_X,j_Y]_\PX} & X \rtimes Y \ar[d] \\
    Y \rtimes X \ar[r] & X \farf Y,
}
\end{equation}
which has to be interpreted as the image of a pushout in $\PXMod_B(\cC)$ under the forgetful functor sending each precrossed module to the domain of its structure morphism ($X+_\PX Y$ denotes the domain of the coproduct of $X$ and $Y$ in $\PXB$ and both the semi-direct products above have a canonical precrossed $B$-module structure determined by those on $X$ and $Y$, as explained in \cite{CMM17}). We may denote by $\Sigma\colon X+_\PX Y \to X \farf Y$ the diagonal of the pushout \eqref{diag:farf}.

In \cite{CE}, Conduch\'e and Ellis defined the \emph{Peiffer commutator} $\langle X,Y \rangle$ of two precrossed $B$-submodules (of groups)
\begin{equation} \label{cospan}
\xymatrix{
X \ar@{ >->}[r]^m \ar[dr]_{\partial} & A \ar[d]^(.4){\partial_A} & Y \ar@{ >->}[l]_{n} \ar[dl]^{\partial'} \\
& B
}
\end{equation}
as the subgroup of $A$ generated by the elements of the form $xyx^{-1}(^{\partial(x)}y)^{-1}$ and $yxy^{-1}(^{\partial'(y)}x)^{-1}$. An internal version of this was defined in \cite{CMM17} for a general semi-abelian category, as the regular image, through the arrow $[m,n]_\PX\colon X +_\PX Y \to A$, of the kernel $N$ of the diagonal of the pushout \eqref{diag:farf}:
\begin{equation} \label{diag:peiff.comm}
\xymatrix{
    N \ar@{ |>->}[d] \ar@{->>}[r] & \langle X,Y \rangle \ar@{ >->}[d] \\
    X +_\PX Y \ar[r]_-{[m,n]_\PX} & A.
}
\end{equation}

\begin{rem} \label{rem:normal}
We recall from Remark 3.12 in \cite{CMM17} that, when $X$ and $Y$ act trivially on each other, the normal closure of their Peiffer commutator coincides with their Huq commutator. In particular, this is the case when both are normal precrossed submodules (which implies that $\partial$ and $\partial'$ are zero maps). 
\end{rem}

\begin{rem}
Notice that the Peiffer commutator of two precrossed submodules as in (\ref{cospan}) is not normal in general. However, it is the case when $A$ is the join of $X$ and $Y$ in $\PXMod_B(\cC)$ (see Remark 3.9 in \cite{CMM17}). In particular, this happens when considering $\langle X,K \rangle$ for some $K$ normal subobject of $X$ in $\PXMod_B(\cC)$. Moreover, we have the following lemma:
\end{rem}

\begin{lemma} \label{lemma:normal}
For a normal precrossed submodule
\[
\xymatrix{
K \ar[r]^{k} \ar[dr]_{0} & X \ar[d]^{\partial} \\
& B
}
\]
the inequality $\langle X,K \rangle\leq K$ holds.
\end{lemma}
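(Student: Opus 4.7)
The plan is to use the universal property of the pushout defining $X\farf K$ together with the normality of $K$. Since $K$ is a normal precrossed submodule of $X$, the morphism $k$ admits a cokernel $q\colon X\twoheadrightarrow X/K$ in $\PXB$; let $\overline{\partial}\colon X/K\to B$ denote the unique factorization of $\partial$ through $q$. Because $qk=0$, the composite $q\cdot[1_X,k]_\PX$ is just $[q,0]_\PX\colon X+_\PX K\to X/K$. The strategy is to factor $[q,0]_\PX$ through the diagonal $\Sigma\colon X+_\PX K\to X\farf K$: once this is achieved, the kernel $n\colon N\hookrightarrow X+_\PX K$ of $\Sigma$ will be sent to zero by $q\cdot[1_X,k]_\PX$, so that $[1_X,k]_\PX\cdot n$ factors through $k$. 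Since the regular image of a morphism that factors through a monomorphism is dominated by that monomorphism, the regular image $\langle X,K\rangle$ of $N$ along $[1_X,k]_\PX$ will then be contained in $K$, giving the desired inequality.

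To produce such a factorization through $\Sigma$, I would invoke the universal property of the pushout \eqref{diag:farf} with $Y=K$ and construct morphisms $\alpha\colon X\rtimes K\to X/K$ and $\beta\colon K\rtimes X\to X/K$ in $\PXB$ whose restrictions to the coproduct injections give $q$ on $X$ and $0$ on $K$. For $\beta$, one sets $\beta:=q\cdot p_X$, where $p_X\colon K\rtimes X\to X$ is the canonical split epimorphism with kernel $j_K$ and section $i_X$. For $\alpha$, the crucial observation is that, since $\partial k=0$, the pullback action $\xi\cdot((\partial k)\flat 1_X)$ of $K$ on $X$ used in defining $X\rtimes K$ coincides with the trivial action. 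It follows that the underlying object of $X\rtimes K$ in $\cC$ is the product $X\times K$ and that its structure morphism to $B$ is $\partial\cdot\pi_X$, for $\pi_X\colon X\times K\to X$ the first projection, so that $\pi_X$ is itself a morphism in $\PXB$; one may then set $\alpha:=q\cdot\pi_X$. A direct verification using $\partial=\overline{\partial}\cdot q$ confirms that $\alpha$ and $\beta$ are morphisms in $\PXB$ that agree on $X+_\PX K$, so the pushout property yields a unique $\phi\colon X\farf K\to X/K$ with $\phi\cdot\Sigma=q\cdot[1_X,k]_\PX$, completing the factorization.

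The main obstacle is the identification of $X\rtimes K$, together with its precrossed $B$-module structure, with $X\times K$ when $\partial k=0$: one has to verify rigorously that this hypothesis trivializes the pullback action of $K$ on $X$ in the semi-abelian setting, using the formula $f^*(\xi)=\xi\cdot(f\flat 1_X)$ for pullback actions recalled earlier, together with a careful analysis of the resulting $B$-action on the semi-direct product. Once this is done, the rest is formal: from $\phi\cdot\Sigma\cdot n=0$ one deduces that $[1_X,k]_\PX\cdot n$ factors through $\ker(q)=k$, hence so does its regular image $\langle X,K\rangle$, giving $\langle X,K\rangle\leq K$.
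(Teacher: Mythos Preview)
Your proof is correct and follows essentially the same route as the paper: both exploit the identification $X\rtimes K\cong X\times K$ coming from $\partial k=0$ together with the universal property of the pushout defining $X\farf K$ to factor the composite $q\cdot[1,k]_\PX$ (equivalently $p\cdot[1,0]_\PX$ in the paper's notation) through $\Sigma$, and then deduce $\langle X,K\rangle\leq K$. The only cosmetic difference is in the concluding step: the paper first produces $\tau\colon X\farf K\to X$ with $\tau\Sigma=[1,0]_\PX$ and then uses a pushout argument to factor the cokernel $X\to X/K$ through $X\to X/\langle X,K\rangle$, whereas you go straight to $X/K$ and argue directly on kernels.
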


\begin{proof}
First of all, let us notice that the trivial precrossed module map
\[
0\colon (0:K\to B,\xi_K)\to (\partial\colon X \to B, \xi_X) 
\]
exists, and so does $[1,0]_\PX\colon X+_{\PX} K\to X$. Moreover $X \rtimes_{0^*\xi_X} K\cong X\times K$ ($K$ acts trivially on $X$). Hence, specializing the pushout \eqref{diag:farf} to our context, by the commutativity of the external square in the diagram
\[
\xymatrix@C=9ex{
  X +_\PX K \ar[r]^-{[j_X,i_K]_\PX} \ar[d]_{[i_X,j_K]_\PX} & X \times K \ar[d] \ar@/^/[ddr]^{p_1} \\
  K \rtimes_{\partial^*\xi_K} X \ar[r] \ar@/_/[drr]_{p_X} & X \farf K \ar@{-->}[dr]^{\tau} \\
  & & X
}
\]
we get a unique arrow $\tau$ such that $\tau\cdot\Sigma=[1,0]_\PX$. Now, we can proceed as in Section 6 of \cite{MM}, and consider the diagram
\[
\xymatrix@C=8ex{
N \ar@{->>}[r] \ar@{ |>->}[d]_{\ker\Sigma} & \langle X,K \rangle \ar@{ |>->}[d] \ar@{ >-->}[r] & K \ar@{ |>->}[dl]^k \\
X+_\PX K \ar@{}[dr]|{(a)} \ar@{->>}[r]^{[1,k]_\PX} \ar@{->>}[d]^{\Sigma} \ar@/_5ex/@{->>}[dd]_{[1,0]_\PX} & X \ar@{->>}[d]^q \ar@/^5ex/@{->>}[dd]^p \\
X \farf K \ar@{->>}[r] \ar@{->>}[d]^{\tau} & \frac{X}{\langle X,K \rangle} \ar@{-->}[d] \\
X \ar@{->>}[r]_p & \frac{X}{K},
}
\]
where $p$ is the cokernel of $k$. It is easy to check that $p\cdot[1,k]_\PX=p\cdot[1,0]_\PX$ by precomposition with the canonical injections. The square $(a)$ is a pushout, since $q$ and $\Sigma$ are cokernels with a regular epimorphic comparison morphism between the corresponding kernels. By universal property we get that $p$ factors through $q$ and hence $\langle X,K \rangle\leq K$.
\end{proof}

\begin{prop} \cite[Proposition 3.11]{CMM17} \label{farf}
The Peiffer commutator $\langle X,Y \rangle$ of two precrossed $B$-submodules as in \eqref{cospan} is trivial if and only if there exists a (unique) morphism $\varphi$ making the diagram
\[
\xymatrix{
    X \ar[r]^-{l_{X}} \ar@{ >->}[dr]_{m} & X \farf Y \ar@{-->}[d]^{\varphi} & Y \ar[l]_-{l_Y} \ar@{ >->}[dl]^{n} \\
    & A.
}
\]
commute.
\end{prop}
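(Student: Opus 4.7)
The plan is to exploit the universal property of $\Sigma\colon X+_\PX Y \twoheadrightarrow X\farf Y$ as a regular epimorphism, together with the fact that the canonical maps $l_X$, $l_Y$ into $X\farf Y$ are obtained by postcomposing the coproduct injections $i_X\colon X \to X+_\PX Y$ and $i_Y\colon Y \to X+_\PX Y$ with $\Sigma$, i.e.\ $l_X = \Sigma\cdot i_X$ and $l_Y = \Sigma\cdot i_Y$. This identification is immediate from the definition of $\Sigma$ as the diagonal of the pushout \eqref{diag:farf} and from the explicit descriptions of $[j_X,i_Y]_\PX$ and $[i_X,j_Y]_\PX$.

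For the easy direction ($\Leftarrow$), I would assume a $\varphi\colon X\farf Y \to A$ with $\varphi\cdot l_X = m$ and $\varphi\cdot l_Y = n$. Precomposing with $i_X$ and $i_Y$ yields $(\varphi\cdot\Sigma)\cdot i_X = m = [m,n]_\PX\cdot i_X$ and similarly for $Y$, so by the universal property of the coproduct $\varphi\cdot\Sigma = [m,n]_\PX$. Composing with $\ker\Sigma\colon N\hookrightarrow X+_\PX Y$ gives $[m,n]_\PX\cdot\ker\Sigma = 0$, so the regular image $\langle X,Y\rangle \hookrightarrow A$ of $N$ through $[m,n]_\PX$ is zero.

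For the converse ($\Rightarrow$), assume $\langle X,Y\rangle = 0$. Since the mono $\langle X,Y\rangle \hookrightarrow A$ is trivial and the left vertical of \eqref{diag:peiff.comm} factors as a regular epi followed by this mono, we get $[m,n]_\PX\cdot\ker\Sigma = 0$. Now $\Sigma$ is a regular epimorphism (the same property used in the diagram of Lemma~\ref{lemma:normal}, where $\Sigma$ appears as a cokernel), and $\PXB$ is semi-abelian, so $\Sigma$ is the cokernel of $\ker\Sigma = N$. The universal property of the cokernel then produces a unique $\varphi\colon X\farf Y \to A$ with $\varphi\cdot\Sigma = [m,n]_\PX$. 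Precomposition with $i_X$ and $i_Y$ gives $\varphi\cdot l_X = m$ and $\varphi\cdot l_Y = n$.

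Uniqueness of $\varphi$ comes for free from $\Sigma$ being an epimorphism: any other $\varphi'$ with the same triangle-commutativity satisfies $\varphi'\cdot\Sigma = [m,n]_\PX = \varphi\cdot\Sigma$, hence $\varphi' = \varphi$. The only pointish step is the claim that $\Sigma$ is a regular epi and hence a cokernel of its kernel; this is already implicit in the construction of the Peiffer product and is used in the preceding lemma, so no new work is needed. Everything else is a straightforward chase of coproduct/pushout universal properties.
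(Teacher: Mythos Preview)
The paper does not give its own proof of this proposition; it is simply quoted from \cite[Proposition~3.11]{CMM17} without argument. So there is nothing in the present paper to compare your attempt against.

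That said, your argument is essentially correct. One small inaccuracy: you assert that $\PXB$ is semi-abelian, but as recalled in Section~\ref{sec1} the equivalent category $\RGB$ (hence $\PXB$) is only quasi-pointed exact protomodular, not pointed. This does not harm the proof, since the fact you actually need---that the regular epimorphism $\Sigma$ is the cokernel of its kernel $N$---holds in this setting and is used explicitly in the proof of Lemma~\ref{lemma:normal}. With that terminological adjustment, both directions and the uniqueness claim go through exactly as you describe.
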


\begin{prop} \cite[Proposition 3.13 and Corollary 3.14]{CMM17} \label{prop.peiffer}
The Peiffer commutator is preserved by regular images: if $q\colon A \to A'$ is a regular epimorphism in $\PXMod_B(\cC)$ and $X$ and $Y$ are precrossed $B$-submodules of $A$ as in \eqref{cospan}, then $q(\langle X,Y \rangle)=\langle q(X),q(Y) \rangle$.

The Peiffer commutator is monotone: if $X\leq X'$ and $Y \leq Y'$ are precrossed $B$-submodules of a given precrossed $B$-module $A$, then $\langle X,Y \rangle \leq \langle X',Y' \rangle$.
\end{prop}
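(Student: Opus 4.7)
The plan is to unwind the definition of the Peiffer commutator as the regular image of the kernel $N$ of the diagonal $\Sigma$ of the pushout \eqref{diag:farf}, and to exploit the functoriality of the Peiffer product construction $(X,Y)\mapsto X\farf Y$ in both variables.

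For the preservation statement, I would first note that, since $q\colon A \twoheadrightarrow A'$ is a regular epimorphism in $\PXMod_B(\C)$, it restricts to regular epimorphisms $X\twoheadrightarrow q(X)$ and $Y\twoheadrightarrow q(Y)$. Both the coproduct in $\PXMod_B(\C)$ and the semi-direct product construction are functorial and preserve regular epimorphisms; consequently, the pair of regular epimorphisms above induces regular epimorphisms on each of the four corners of \eqref{diag:farf}, and hence, by the universal property of the pushout, a regular epimorphism $X\farf Y \twoheadrightarrow q(X)\farf q(Y)$. Passing to the kernels $N$ and $N'$ of $\Sigma$ and of the analogous $\Sigma'$ in the diagram
\[
\xymatrix@C=8ex{
N \ar@{ |>->}[r] \ar[d] & X +_\PX Y \ar@{->>}[r]^-{\Sigma} \ar@{->>}[d] & X \farf Y \ar@{->>}[d] \\
N' \ar@{ |>->}[r] & q(X) +_\PX q(Y) \ar@{->>}[r]_-{\Sigma'} & q(X) \farf q(Y)
}
\]
and invoking the denormalized $3\times 3$-lemma in the semi-abelian category $\PXMod_B(\C)$, one concludes that the induced arrow $N\twoheadrightarrow N'$ is itself a regular epimorphism. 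Finally, since regular images compose and are preserved by regular epimorphisms, the regular image of $N$ under the composite $q\cdot [m,n]_\PX\colon N\to A'$, which is $q(\langle X,Y\rangle)$, coincides with the regular image of $N'$ under $[m',n']_\PX$, which is $\langle q(X),q(Y)\rangle$.

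For monotonicity, the argument is analogous but simpler, with monomorphisms replacing regular epimorphisms and no further exactness condition required. The inclusions $X\rightarrowtail X'$ and $Y\rightarrowtail Y'$ induce, by functoriality of all the constructions involved in the pushout \eqref{diag:farf}, a morphism $X\farf Y\to X'\farf Y'$ compatible with the diagonals, and hence a morphism $N\to N'$ between the corresponding kernels. Composing with $[m',n']_\PX\colon X'+_\PX Y'\to A$ and taking regular images shows that the image of $N$ in $A$, namely $\langle X,Y\rangle$, factors through the image of $N'$ in $A$, namely $\langle X',Y'\rangle$, yielding the desired inequality.

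I expect the main technical obstacle to be the step in the preservation part showing that the induced map $N\twoheadrightarrow N'$ is again a regular epimorphism. This depends on the denormalized $3\times 3$-lemma, whose hypotheses require checking that the relevant square of regular epimorphisms between short exact sequences is a regular pushout (equivalently, that the comparison from $X+_\PX Y$ into the pullback of $X\farf Y$ and $q(X)+_\PX q(Y)$ over $q(X)\farf q(Y)$ is a regular epimorphism). Once this is settled, together with the ancillary fact that coproducts and semi-direct products in $\PXMod_B(\C)$ preserve regular epimorphisms, the rest of the argument is a routine regular-image chase.
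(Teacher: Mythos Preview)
The paper does not supply its own proof of this proposition: it is quoted verbatim from \cite{CMM17} (Proposition~3.13 and Corollary~3.14) and used as a black box. So there is no in-paper argument to compare your proposal against; what follows is a brief assessment of your sketch on its own terms.

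Your overall strategy---exploit functoriality of the Peiffer product, compare the two short exact sequences $N\rightarrowtail X+_\PX Y \twoheadrightarrow X\farf Y$ and $N'\rightarrowtail q(X)+_\PX q(Y)\twoheadrightarrow q(X)\farf q(Y)$, show that $N\to N'$ is a regular epimorphism, and then chase regular images---is the standard one and is correct in outline. Two remarks. First, a minor slip: $\PXMod_B(\cC)$ is not semi-abelian but only quasi-pointed, exact and protomodular (it is equivalent to $\RGB$, whose structure is recalled in Section~\ref{sec1}); the short exact sequences here live in $\cC$ via the forgetful functor, and it is in $\cC$ that the $3\times 3$-type argument takes place. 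Second, the point you single out---that the right-hand square must be a regular pushout in order to force $N\to N'$ to be regularly epimorphic---is indeed the heart of the matter. One convenient way to settle it is to observe that in an exact Mal'tsev category a commutative square of regular epimorphisms is a regular pushout as soon as it is a pushout, and then to check the pushout property by pasting: both $X\farf Y$ and $q(X)\farf q(Y)$ are themselves pushouts of the same shape, and the vertical comparison maps on $X\rtimes Y$ and $Y\rtimes X$ are regular epimorphisms because the semi-direct product functor, arising from the monadic adjunction $\Ker_B\dashv B+(-)$, preserves them. Once this is in place your image chase goes through, and the monotonicity argument is, as you say, the easier half.
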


Finally, we recall a condition, also introduced in \cite{CMM17}, that one may ask on a semi-abelian category \cC, and that turns out to be crucial in order to prove Theorem \ref{thm:main}:
\begin{itemize}
    \item [\UA] Given an extremal epimorphic cospan $\xymatrix{A \ar[r]^f & B & C \ar[l]_g}$ in \cC, then for any 4-tuple $(\xi_1,\xi_2,\xi_3,\xi_4)$ of actions on a fixed object $X$ making the diagram
\begin{equation} \label{diag:C}
\begin{aligned}
\xymatrix{
    A \flat X \ar[dr]_{\xi_1} \ar[r]^{f\flat 1} & B\flat X \ar@<-.5ex>[d]_(.4){\xi_3} \ar@<.5ex>[d]^(.4){\xi_4}
        & C \flat X \ar[dl]^{\xi_2} \ar[l]_{g\flat 1} \\
    & X
}
\end{aligned}
\end{equation}
commute, we have $\xi_3=\xi_4$.
\end{itemize}
As proved in \cite{CMM17}, this property holds in any \emph{action representable} semi-abelian category (see \cite{BJK}) and in any \emph{category of interest} in the sense of Orzech \cite{Orzech}, so the categories of groups, Lie and Leibniz algebras over a fixed field, rings, associative algebras, Poisson algebras over a commutative ring with unit are all examples of such. Note that the property \UA\ implies the property \SH\ recalled in Section \ref{sec1} (see \cite{Cig}). We are now ready to state the main result of this paper.

\begin{theorem} \label{thm:main}
Let \cC\ be a semi-abelian category satisfying \UA, and $B$ an object in \cC. An extension
\begin{equation} \label{diag:f}
\xymatrix{ X  \ar@{->>}[rr]^-{f} \ar[dr]_{\partial}&   & Y  \ar[dl]^{\partial'} \\
&B & }
\end{equation}
of precrossed $B$-modules in \cC\ is $\XMod_B(\C)$-central if and only if
\[
\langle K[f] , X \rangle = 0. 
\]
\end{theorem}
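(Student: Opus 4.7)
The plan is to use the equivalences $\PXB \simeq \RGB$ and $\XB \simeq \GPD$ to reduce to the Huq commutator characterisation \eqref{char.centr} of $\GPD$-central extensions of reflexive graphs recalled in the introduction, and then to show that the two vanishing conditions in $X_1$ are jointly equivalent to the single condition $\langle K[f], X \rangle = 0$, via the characterisation of the Peiffer commutator given in Proposition \ref{farf}.

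First I would observe that, under normalisation, $f$ is $\XB$-central in $\PXB$ if and only if the corresponding $f_1 \colon X_1 \to Y_1$ is $\GPD$-central in $\RGB$, and by \eqref{char.centr} this is precisely the conjunction of $[K[f_1], K[c]]_{Huq}^{X_1} = 0$ and $[K[f_1], K[d]]_{Huq}^{X_1} = 0$. Here $K[d] = X$ via $j_X$, while $K[f_1]$ identifies with $K[f]$ (embedded in $X_1$ via $j_X$) because $f_1$ restricts to the identity on the $B$-factor of the decomposition $X_1 \cong X \rtimes B$, and $K[c]$ is the ``twisted'' copy of $X$ inside $X_1$ coming from the second splitting $(c, e)$.

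Next, by Proposition \ref{farf}, the condition $\langle K[f], X \rangle = 0$ is equivalent to the existence of a morphism $\varphi \colon K[f] \farf X \to X$ in $\PXB$ factoring the cospan $K[f] \hookrightarrow X \leftarrow X$. By the pushout definition \eqref{diag:farf}, such a $\varphi$ amounts to a compatible pair: (i) a morphism $K[f] \rtimes X \to X$ restricting to the inclusion and to $1_X$, and (ii) a morphism $X \rtimes K[f] \to X$ restricting to $1_X$ and to the inclusion. Since $K[f]$ has zero structure map, the action of $K[f]$ on $X$ is trivial, so $X \rtimes K[f] = X \times K[f]$, and the existence of (ii) is tantamount to $[K[f], X]_{Huq}^X = 0$; because $X$ is normal in $X_1$, this in turn matches $[K[f_1], K[d]]_{Huq}^{X_1} = 0$. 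For the product in (i), the action of $X$ on $K[f]$ is the $\partial$-pullback of the $B$-action $\xi$, so the existence of (i) expresses that the conjugation action of $X$ on $K[f]$ inside $X$ agrees with the pulled-back action---the crossed-module condition restricted to $K[f]$---and a direct computation using the alternative decomposition $X_1 \cong K[c] \rtimes B$ shows this to be equivalent to $[K[f_1], K[c]]_{Huq}^{X_1} = 0$.

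The main obstacle, and the place where the condition $\UA$ intervenes, is precisely the last identification: a priori there are two candidate actions of $X$ on $K[f]$ (the conjugation inside $X$, and the $\partial$-pullback of $\xi$), and the two Huq conditions give their agreement only after restriction along the two canonical morphisms out of $K[f] +_\PX X$ that build the pushout $K[f] \farf X$. The condition $\UA$, applied to a diagram of the shape \eqref{diag:C} built from the jointly extremal-epimorphic cospan $K[f] +_\PX X \to K[f] \rtimes X$ and $K[f] +_\PX X \to X \rtimes K[f]$, is what upgrades this piecewise agreement to an equality of the full actions on $X \flat K[f]$, thereby delivering $\varphi$ from the two Huq vanishings and, conversely, extracting both vanishings from a single $\varphi$.
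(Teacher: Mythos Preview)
Your overall strategy---reduce via the equivalence $\PXB \simeq \RGB$ to the Huq characterisation \eqref{char.centr}, and unpack $\langle K[f], X\rangle = 0$ through Proposition~\ref{farf} into the pair of conditions (i) and (ii)---matches the paper's ``if'' direction closely. You are also right that (ii) is exactly $[K[f], X]_{Huq} = 0$ (since $K[f]$ acts trivially on $X$), and that (i) amounts to the equality of actions $\partial^*\psi = \chi^X_K$ on $K = K[f]$, where $\psi$ is the $B$-action on $K$.

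The gap is in your application of \UA. What you describe is a \emph{span} out of $K[f] +_{\PX} X$, not a cospan, and it does not fit the template \eqref{diag:C} at all. The paper instead works in \cC, with $K$ as the object acted upon, and compares two actions of $X_1$ on $K$: the conjugation $\chi^{X_1}_K$ and the pullback $c^*\psi$. The relevant extremal-epimorphic cospan is $X \xrightarrow{h} X_1 \xleftarrow{e} B$ (with $h = \ker(d)$), which is extremal epic by protomodularity. One checks that both $X_1$-actions restrict to the same $X$-action along $h$ (this uses (i), i.e.\ $\partial^*\psi = \chi^X_K$) and to the same $B$-action $\psi$ along $e$; then \UA\ yields $c^*\psi = \chi^{X_1}_K$, and restricting this along $\ker(c)$ shows that the conjugation of $K[c]$ on $K$ is the trivial action $0^*\psi$, whence $[K[f_1], K[c]] = 0$. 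Your ``direct computation using $X_1 \cong K[c]\rtimes B$'' does not bypass this: it is precisely the step where \UA\ is indispensable.

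A second, smaller point: your route makes the ``only if'' direction depend on the converse implication $[K[f_1], K[c]] = 0 \Rightarrow$ (i), which you assert but do not argue; it can be obtained, but it requires another application of \UA\ (now to the cospan $K[c] \to X_1 \leftarrow B$). The paper sidesteps this entirely by proving ``only if'' via stability of the Peiffer commutator under regular images and pullbacks (Proposition~\ref{prop.peiffer}): every extension between crossed modules has $\langle K[f], X\rangle = 0$, hence so does every trivial extension (pullback of such), hence every central one.
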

 
\begin{proof}
We first prove that if
\[\xymatrix{
(\partial_{X\times_{Y} Z}\colon X\times_{Y} Z \to B,\xi_{X\times_{Y} Z}) \ar@{->>}[r]^-{g'} \ar@{->>}[d]_{f'} & (\partial\colon X\to B,\xi )\ar@{->>}[d]^{f} \\
(\partial_{Z}\colon Z\to B,\xi_Z) \ar@{->>}[r]_{g} & (\partial'\colon Y\to B, \xi')
}\]
is a pullback and $g$ a regular epimorphism in the category $\PXB$, then $\langle K[f'],X\times_Y Z\rangle =0$ if and only if $\langle K[f],X\rangle =0$. We recall that such a pullback gives in particular a square
\[\xymatrix{
X\times_Y Z \ar@{->>}[r]^-{g'} \ar@{->>}[d]_{f'} & X\ar@{->>}[d]^{f} \\
Z\ar@{->>}[r]_{g} & Y
}\]
that is a pullback in $\C$, with $g$ a regular epimorphism in $\C$. This implies that $g'$ is also a regular epimorphism, and that $g'(\ker(f'))=\ker(f)$. Assuming first that $\langle K[f'],X\times_Y Z\rangle =0$, we then find that
\[\langle K[f],X\rangle =\langle g'(K[f']),g'(X\times_Y Z)\rangle =g'\left(\langle K[f'],X\times_Y Z\rangle\right)=0\]
because the Peiffer commutator is preserved under regular images by Proposition \ref{prop.peiffer}. Assuming now that $\langle K[f],X\rangle=0$, the same reasoning shows that $g'\left(\langle K[f'],X\times_Y Z\rangle\right)=0$. Moreover
\[f'\left(\langle K[f'],X\times_Y Z\rangle\right)=\langle f'(K[f']),Z\rangle =\langle 0,Z\rangle = 0.\]
Since $f'$ and $g'$ are jointly monic, this implies that $\langle K[f'],X\times_Y Z\rangle=0$.

Now $\langle K[f],X\rangle \leq \langle X,X\rangle$, so that any extension $f$ between crossed modules must satisfy $\langle K[f],X\rangle=0$. The previous argument then implies that the same is true for all trivial extensions with respect to \eqref{adjunction-PX}, since by definition a trivial extension is the pullback of an extension of crossed modules. This in turn implies that every central extension satisfies $\langle K[f],X\rangle=0$, since an extension is central if there exists a regular epimorphism $g$ such that the pullback of $f$ along $g$ is a trivial extension, and this proves the ``only if'' part.

Concerning the ``if'' part, let us first observe that, for any morphism \eqref{morphism}
in $\RG_B(\cC)$, the pullback
\begin{equation} \label{diag:K_1}
\xymatrix{
K_1 \ar@{ >->}[r]^{k_1} \ar@{->>}[d]_{p} & X_1 \ar@{->>}[d]^{f_1} \\
B \ar@{ >->}[r]_{e'} & Y_1
}
\end{equation}
determines a kernel (in the sense of quasi-pointed categories) of $f_1$ in $\RG_B(\cC)$, described by the following diagram:
\[
\xymatrix{
K_1 \ar@<1ex>[dr]^{p} \ar@<-1ex>[dr]_{p} \ar@{ >->}[rr]^{k_1} & & X_1 \ar@<1ex>[dl]^{c} \ar@<-1ex>[dl]_{d} \\
& B \ar[ul]|s \ar[ur]|e
}
\]
where $s$ is the unique arrow such that $k_1s=e$ and $ps=1_B$.

Taking the kernels in \cC\ of the domain projections of $K_1$, $X_1$ and $Y_1$, and the morphisms between them induced by $f_1$ and $k_1$, we get the pullback squares
\[
\xymatrix{
K \ar@{ |>->}[r]^-{k} \ar@{ |>->}[d]_{j=\ker(p)} & X \ar@{->>}[r]^{f} \ar@{ |>->}[d]_{h=\ker(d)} & Y \ar@{ |>->}[d]^{h'=\ker(d')} \\
K_1 \ar@{ >->}[r]_{k_1} & X_1 \ar@{->>}[r]_{f_1} & Y_1.
}
\]
It is easy to check that $k=\ker(f)$ and $hk=\ker(f_1)$, so that $K= K[f]=K[f_1]$ is indeed a normal subobject of $X_1$. The corresponding morphisms of precrossed modules will then look like
\[
\xymatrix{
K \ar[dr]_{0} \ar@{ |>->}[r]^-{k} & X \ar[d]_{\partial} \ar@{->>}[r]^{f} & Y \ar[dl]^{\partial'} \\
& B.
}
\]
We denote by $\psi\colon B\flat K\to K$ the action of $B$ on $K$ corresponding to the point $(p,s)$, which gives the precrossed module structure on $0\colon K \to B$.

It follows from Proposition \ref{farf} that the Peiffer commutator $\langle K,X \rangle$ is trivial if and only if there exists an arrow $\varphi$ making the diagram
\[
\xymatrix{
    K \ar[r]^-{l_{K}} \ar@{ |>->}[dr]_{k} & K \farf X \ar@{-->}[d]^{\varphi} & X \ar[l]_-{l_X} \ar[dl]^{1_X} \\
    & X
}
\]
commute. By precomposition, this in turn yields the (unique) dashed morphisms making the diagrams
\[
\xymatrix{
    K \ar[r]^-{(1,0)} \ar@{ |>->}[dr]_{k} & K \times X \ar@{-->}[d] & X \ar[l]_-{(0,1)} \ar[dl]^{1_X} \\
    & X
} \qquad
\xymatrix{
K \ar[r]^-{j_K} \ar@{ |>->}[dr]_{k} & K \rtimes_{\partial^*\psi} X \ar@{-->}[d] & X \ar[l]_-{i_X} \ar[dl]^{1_X} \\
& X
}
\]
commute, where in the left hand diagram we used the isomorphism $K \ltimes_{0^*\xi} X \cong K \times X$. So, in fact, the first diagram tells us that $K$ and $X$ commute in the sense of Huq, i.e.\ $[K,X]=[K[f_1],K[d]]=0$. On the other hand, the right hand diagram commutes if and only if the square
\[
\xymatrix@C=5ex{
X\flat K \ar[r]^{\ker[1,0]} \ar[d]_{\partial^*\psi} & X+K \ar[d]^{[1,k]} \\
K \ar[r]_{k} & X
}
\]
commutes (see \cite{J03}).
If we replace $\partial^*\psi$ by the conjugation action $\chi^X_K$ of $X$ on its normal subobject $K$, we get an analogous commutative diagram. As a consequence $\partial^*\psi=\chi^X_K$, since $k$ is a monomorphism.

Consider now the diagram
\begin{equation} \label{diag:two.actions}
\xymatrix@=7ex{
X\flat K \ar[r]^-{h\flat 1} \ar[dr]_{\partial^*\psi} & X_1\flat K \ar@<.5ex>[d]^(.4){\chi^{X_1}_K} \ar@<-.5ex>[d]_(.4){c^*\psi} & B\flat K \ar[l]_-{e\flat 1} \ar[dl]^{\psi} \\
& K,
}
\end{equation}
where $\chi^{X_1}_K$ denotes the conjugation action of $X_1$ on its normal subobject $K$. We want to show that both possible choices of the middle vertical arrow make the two triangles commute.

Let us start with the triangles on the left. The equality $\partial^*\psi=c^*\psi\cdot(h\flat 1)$ easily follows from the fact that $\partial=c\cdot h$, while the equality $\partial^*\psi=\chi^{X_1}_K\cdot(h\flat 1)$ holds because $\partial^*\psi=\chi^{X}_K$, as we proved above, and the commutative diagram
\[
\xymatrix@R=4ex{
& X_1\flat K \ar[rr]^-{\ker[1,0]} \ar[dd]_(.7){\chi^{X_1}_K} & & X_1+K \ar[dd]^{[1,hk]} \\
X\flat K \ar[rr]^(.65){\ker[1,0]} \ar[dd]_{\chi^{X}_{K}} \ar[ur]^{h\flat 1} & & X+K \ar[ur]^-{h+1} \ar[dd]^(.3){[1,k]} \\
& K \ar[rr]_(.3){hk} & & X_1 \\
K \ar[rr]_-{k} \ar@{=}[ur] & & X \ar[ur]_{h}
}
\]
shows that $\chi^{X}_{K}=\chi^{X_1}_K\cdot(h\flat 1)=h^*\chi^{X_1}_K$, since by composing with the monomorphism $kh$ they are equal.

As for the right hand triangles, by definition of pullback action we have $c^*\psi=\psi\cdot(c\flat 1)$, hence $c^*\psi\cdot(e\flat 1)=\psi\cdot(c\flat 1)\cdot(e\flat 1)=\psi$. On the other hand, the diagram
\[
\xymatrix@R=4ex{
& X_1\flat K \ar[rr]^-{\ker[1,0]} \ar[dd]_(.7){\chi^{X_1}_K} & & X_1+K \ar[dd]^{[1,k_1j]} \\
B\flat K \ar[rr]^(.65){\ker[1,0]} \ar[dd]_{\psi} \ar[ur]^{e\flat 1} & & B+K \ar[ur]^-{e+1} \ar[dd]^(.3){[s,j]} \\
& K \ar[rr]_(.3){k_1 j} & & X_1 \\
K \ar[rr]_-{j} \ar@{=}[ur] & & K_1 \ar[ur]_{k_1}
}
\]
shows that $\psi=\chi^{X_1}_K\cdot(e\flat 1)$, since by composing with the monomorphism $k_1j$ they are equal.

By \UA, since the cospan $(h,e)$ is extremal epimorphic by protomodularity (see Lemma 3.1.22 in \cite{BB}), the above arguments imply that $c^*\psi=\chi^{X_1}_K$.

Finally, if we consider $K$ as a (normal) subobject of $K[c]$:
\[
\xymatrix{
K \ar@{ |>->}[r] \ar@{ |>->}[d]_{j=\ker(p)} & K[c] \ar@{ |>->}[d]^{l=\ker(c)} \\
K_1 \ar@{ >->}[r]_{k_1} & X_1,
} 
\]
we get, as before, that $\chi^{K[c]}_K=l^*\chi^{X_1}_K$. Hence $\chi^{K[c]}_K=l^*\chi^{X_1}_K=l^*c^*\psi=0^*\psi$, which means that the conjugation action of $K[c]$ on $K$ is trivial, i.e.\ $[K,K[c]]=[K[f_1],K[c]]=0$.

Thanks to the characterization (\ref{char.centr}), this proves that any extension $f$ of precrossed $B$-modules as in \eqref{diag:f} is central with respect to \eqref{adjunction-PX} if the Peiffer commutator $\langle K[f],X\rangle$ is trivial.
\end{proof}

The previous characterization of central extensions, together with the properties of the Peiffer commutator, yields the following result.

\begin{coro}\label{coro:centralization}
If $f$ is an extension in $\PXB$ as in \eqref{diag:f}, then the induced extension
\[
\xymatrix{\frac{X}{\langle K[f],X\rangle} \ar@{->>}[rr]^-{\overline{f}} \ar[dr]_{\overline{\partial}} &  & Y  \ar[dl]^{\partial'} \\
&B & }
\]
is central and, moreover, any morphism $h$ from $f$ to a central extension $g$ factors uniquely through $\overline{f}$.
Accordingly, the category of $\XMod_B(\C)$-central extensions in $\PXMod_B(\mathcal C)$ is a reflective subcategory of the category of extensions in $\PXMod_B(\mathcal C)$.
\end{coro}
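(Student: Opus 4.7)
The plan is to construct $\overline{f}$ explicitly as a quotient of $f$, verify its $\XB$-centrality via Theorem \ref{thm:main}, and then derive the universal factorisation property from the monotonicity and the regular image preservation of the Peiffer commutator established in Proposition \ref{prop.peiffer}.

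First I would exploit the symmetry of the defining pushout \eqref{diag:farf} to get $\langle K[f], X\rangle = \langle X, K[f]\rangle$. Since $K[f]$ is a normal precrossed submodule of $X$, Lemma \ref{lemma:normal} then yields $\langle K[f], X\rangle \leq K[f]$, so $f$ factors through $\eta_X \colon X \twoheadrightarrow X/\langle K[f], X\rangle$ as $\overline{f} \cdot \eta_X$, with $\overline{f}$ still a regular epimorphism. Using $\langle K[f], X\rangle \leq K[f]$ once more, the direct image $\eta_X(K[f])$ coincides with $K[\overline{f}]$, and Proposition \ref{prop.peiffer} gives
\[
\langle K[\overline{f}], X/\langle K[f], X\rangle\rangle = \langle \eta_X(K[f]), \eta_X(X)\rangle = \eta_X(\langle K[f], X\rangle) = 0,
\]
so Theorem \ref{thm:main} certifies that $\overline{f}$ is $\XB$-central.

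For the universal property, I would let $(h_X, h_Y) \colon f \to g$ be a morphism of extensions with $g \colon X' \twoheadrightarrow Y'$ $\XB$-central. The identity $g h_X = h_Y f$ forces $h_X(K[f]) \leq K[g]$. I would then factor $h_X = m \cdot e$ through its regular image $I = h_X(X)$, with $e$ a regular epimorphism and $m$ a monomorphism. Proposition \ref{prop.peiffer} gives
\[
e(\langle K[f], X\rangle) = \langle e(K[f]), I\rangle,
\]
and by monotonicity applied inside $X'$ this submodule is majorised by $\langle K[g], X'\rangle = 0$; since $m$ is monic, it already vanishes in $I$. Hence $\langle K[f], X\rangle \leq K[h_X]$, and $h_X$ factors uniquely through $\eta_X$, yielding the desired morphism $\overline{h} \colon \overline{f} \to g$. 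This exhibits $\overline{f}$ as the reflection of $f$ along the inclusion of $\XB$-central extensions into the category of extensions.

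The hard part will be the universal step, where one has to juggle two complementary facts about the Peiffer commutator: monotonicity, which compares commutators of nested submodules in a fixed ambient precrossed module, and preservation by regular images, which transports them along regular epimorphisms. The image factorisation of $h_X$ is exactly what lets both properties be applied, but care is needed to pass correctly between the three ambient precrossed modules $X$, $I$ and $X'$ that appear in the argument.
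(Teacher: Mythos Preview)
Your proposal is correct and follows essentially the same route as the paper: both parts rely on Proposition~\ref{prop.peiffer} (preservation by regular images for the centrality of $\overline{f}$, and the image factorisation of $h_X$ plus monotonicity for the universal property). The paper phrases the key intermediate step slightly differently---it notes that the restriction $g\cdot m$ of $g$ to the image $I$ is again a central extension (citing \cite[Proposition~3.13]{CMM17}), which is exactly your combination of monotonicity in $X'$ together with the fact that a monomorphism reflects vanishing of the Peiffer commutator. The only cosmetic difference is that the paper treats morphisms of extensions with fixed codomain $Y$, while you handle an arbitrary $(h_X,h_Y)$; this makes your verification of the reflection property marginally more self-contained.
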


\begin{proof}
First observe that the extension $\overline{f}$ is central. Indeed, if we write $\eta \colon X \rightarrow \frac{X}{\langle K[f],X\rangle}$ for the canonical quotient, then 
$$ \langle K[\overline{f}] , \frac{X}{\langle K[f],X\rangle} \rangle = \langle \eta( K[f]) , \eta (X)\rangle  
 =  \eta \langle K[f], X \rangle 
 = 0,$$
where we have used the property of preservation of the Peiffer commutator by regular images (\ref{prop.peiffer}). Let then $h \colon X \rightarrow Z$ be a morphism in $\PXB$ from $f$ to another central extension $g \colon Z \rightarrow Y$, so that $g h = f$. Consider the factorization of $h$ in $\mathcal C$ as a regular epimorphism $q$ followed by a monomorphism $i$:
$$
\xymatrix{X \ar[rr]^h \ar@{->>}[rd]_q  & & Z \\ 
& {I\, \, } \ar@{>->}[ur]_i & }
$$
To show that $h$ factors through $\eta$ it suffices to prove that $q$ factors through $\eta$. First observe that the induced morphism $g i \colon I \rightarrow Y$ is a central extension, i.e. $\langle K[g i], I \rangle = 0$ (this follows immediately from Proposition $3.13$ in \cite{CMM17}).
By applying once again the property of preservation of the Peiffer commutator by regular images this implies that $$q(\langle K[f], X \rangle) = \langle q(K[f]), q(X) \rangle =  \langle K[gi] , I \rangle = 0.$$
The last statement is then clear, since we have just proved that $\eta$ satisfies the universal property of the $f$-component of the unit of the reflection into the subcategory of $\XMod_B(\C)$-central extensions in $\PXMod_B(\mathcal C)$.
\end{proof}

Since quotienting by $\langle X, K[f]\rangle$ gives the centralization of an extension $f$, under the hypotheses of Theorem \ref{thm:main}, the normal sub-precrossed module $(0\colon \langle X, K[f]\rangle\to B,\xi_{\langle X, K[f]\rangle})$ coincides with the relative commutator $[X,K[f]]_{\PXB}$ as defined in \cite{EG07}.

\section{Hopf formula for the fundamental group and homology}

Given a normal extension $f\colon X\to Y$ in $\PXB$, its \emph{Galois groupoid} is defined (see for example \cite{J08,JK}) as the reflection of its kernel pair $(Eq[f],p_1,p_2)$ into $\XB$. By analogy with the pointed case, we call the intersection of the kernels of $G(p_1)$ and $G(p_2)$ the \emph{Galois group} of $f$ and denote it by $\mathrm{Gal}(f,0)$. This is equivalent to the kernel of the normalization of the Galois groupoid, i.e.~of the composite $G(p_2) \ker (G(p_1)) \colon K[G(p_1)] \rightarrow G(X)$. Since $f$ is a normal extension, the square
\[\xymatrix{ Eq[f] \ar@{->>}[d]_{\eta_{Eq[f]}} \ar[r]^-{p_1} & X \ar@{->>}[d]^{\eta_X} \\
G(Eq[f]) \ar[r]_-{G(p_1)} & G(X)
}\]
is a pullback, and thus $\ker(G(p_1))$ is equal to $\eta_{Eq[f]}\ker(p_1)$. We then have
\[G(p_2)\ker(G(p_1)) = G(p_2) \eta_{Eq[f]}\ker(p_1)=\eta_X p_2 \ker(p_1)=\eta_X \ker(f),\]
and thus
\[\mathrm{Gal}(f,0)=K[\eta_X \ker(f)]=K[f]\wedge K[\eta_X]=K[f]\wedge \langle X, X\rangle.\]

Let us assume that the category $\PXB$ has enough (regular) projectives. This is the case, for instance, whenever $\mathcal C$ is a semi-abelian variety (see for example \cite{EG06}). For a given precrossed module $(\partial\colon X\to B, \xi_X)$, we can then consider a regular epimorphism
\[
p\colon (\partial_P\colon P\to B, \xi_P)\to (\partial\colon X\to B, \xi_X) 
\]
with $(\partial_P\colon P\to B, \xi_P)$ a projective precrossed module, and then its centralisation
\[
\xymatrix{
(\partial_P\colon P\to B, \xi_P) \ar@{->>}[dr]_p \ar[rr]^q & & (\overline{\partial_{P}}\colon \frac{P}{\langle P,K[p]\rangle}\to B, \overline{\xi_{P}}) \ar@{->>}[dl]^{\overline{p}} \\
& (\partial\colon X\to B, \xi_X)
}
\]
in $\PXB$. Since $(\partial_P\colon P\to B, \xi_P)$ is projective, thanks to the universal property of the centralization expressed by Corollary \ref{coro:centralization}, one can show that $\overline{p}$ is a \emph{weakly universal central extension}: for any other central extension $c\colon(\partial'\colon Y\to B,\xi_Y)\to(\partial\colon X\to B, \xi_X)$, there exists a morphism of precrossed modules $t\colon(\overline{\partial_{P}}\colon \frac{P}{\langle P,K[p]\rangle}\to B, \overline{\xi_{P}})\to(\partial'\colon Y\to B,\xi_Y)$ such that $ct=\overline{p}$. In our context, such a universal central extension is in fact normal, so that we can consider its fundamental groupoid. The Galois groupoid of $(\partial\colon X\to B, \xi_X)$ can then be defined as the Galois groupoid of $\overline{p}$ since, according to \cite{J08}, it does not depend on the choice of the weakly universal normal extension of $(\partial\colon X\to B, \xi_X)$. The fundamental group $\pi_1(\partial\colon X\to B, \xi_X)$ is the Galois group $\mathrm{Gal}(\overline{p},0)$. This is given as above by the formula 
\[
\pi_1(\partial_X\colon X\to B,\xi_X)=\mathrm{Gal}(\overline{p},0)=K[\overline{p}] \wedge \left\langle \frac{P}{\langle P,K[p]\rangle},\frac{P}{\langle P,K[p]\rangle}\right\rangle.
\]
Since the Peiffer commutator is preserved by regular images, we have
\[
\left\langle \frac{P}{\langle P, K[p]\rangle},\frac{P}{\langle P,K[p]\rangle} \right\rangle =\frac{\langle P,P\rangle}{\langle P,K[p]\rangle}.
\]
Moreover, since we have a regular epimorphism
\[
\overline{p}\colon\frac{P}{\langle P,K[p]\rangle} \to X\cong \frac{P}{K[p]},
\]
the Noether isomorphism theorem (see Theorem 2.2 in \cite{EG07}) gives us
\[
K[\overline{p}] = \frac{K[p]}{\langle P,K[p]\rangle}.
\]
To sum up, we find that the Galois group of the precrossed module $(\partial\colon X\to B,\xi)$ is given by the Hopf formula
\[
\pi_1(\partial_X\colon X\to B,\xi_X)\cong \frac{K[p]\wedge \langle P,P\rangle}{\langle P,K[p]\rangle},
\]
which is also the second homology object ${H}_2 ( X, \partial)$ of $(\partial\colon X\to B,\xi)$ as defined in \cite{EG07}.

Recall that two composable arrows in $\PXB$
\[
\xymatrix{(K, \partial_K) \ar[r]^f & (X, \partial_X) \ar[r]^g & (Y, \partial_Y) }
\]
form a \emph{short exact sequence} in $\PXB$ if $f = \ker (g)$ and $g$ is a regular epimorphism. Notice that, in this case, $\partial_K = 0$. A diagram 
\[
\xymatrix{(X_{i-1}, \partial_{i-1}) \ar[r]^-{f_{i-1}} & (X_i, \partial_i) \ar[r]^-{f_{i}} & (X_{i+1}, \partial_{i+1}) }
\]
is an \emph{exact sequence} if 
\[
\xymatrix{(I(f_{i-1}), \partial_{i} m_{i-1}) \ar[r]^-{m_{i - 1}} & (X_i, \partial_i) \ar[r]^-{p_{i}} & (I(f_{i}), \partial_{i+1}m_i) }
\]
is a short exact sequence, where $ \xymatrix{ X_j \ar[r]^-{p_j} & I_j \ar[r]^-{m_j} & X_{j+1}  }$ is the regular epi-mono factorization in $\mathcal C$ of the morphism $f_j$ \cite{CMM17}.
Given a short exact sequence 
\begin{equation} \label{ses}
\xymatrix{
0 \ar[r] & (K, \partial_K) \ar[r]^f & (X, \partial_X) \ar[r]^g & (Y, \partial_Y) \ar[r] & 0
}
\end{equation}
and a projective presentation $p\colon (P, \partial_P)\to (X, \partial_X)$ of $(X,\partial_X)$, this also gives a projective presentation $gp \colon  (P, \partial_P) \to (Y, \partial_Y)$ of  $(Y,\partial_Y)$. It follows that 
\[H_2(Y, \partial_Y) \cong \frac{K[gp]\wedge \langle P,P\rangle}{\langle P,K[gp]\rangle},\] and we then get the following extension of the Stallings-Stammbach theorem for precrossed modules (of groups) given in \cite{CE}:

\begin{prop}\label{SS}
Any short exact sequence \eqref{ses} in $\PXB$, with $p\colon (P, \partial_P)\to (X, \partial_X)$ a projective presentation of $(X,\partial_X)$, induces a five-term exact sequence 
\begin{equation}\label{SS-PXB}
\xymatrix{ \frac{K[p]\wedge \langle P,P\rangle}{\langle P,K[p]\rangle} \ar[r] &  \frac{K[gp]\wedge \langle P,P\rangle}{\langle P,K[gp]\rangle} \ar[r] & \frac{K}{\langle K,X \rangle} \ar[r] & \frac{X}{\langle X, X \rangle} \ar@{->>}[r]^{\overline{g}}  & \frac{Y}{\langle Y, Y \rangle} }
\end{equation}
where the morphism $\overline{g}$ is a regular epimorphism.
\end{prop}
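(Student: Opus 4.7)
The plan is to derive \eqref{SS-PXB} by specializing to the Birkhoff reflection \eqref{adjunction-PX} the general Stallings--Stammbach type five-term exact sequence for Birkhoff reflections in semi-abelian categories with enough projectives established in \cite{EG07}, and then identifying each of its terms with the ones appearing in the statement. Since $\mathcal{C}$ is semi-abelian satisfying \UA, the category $\PXB$ is semi-abelian with enough projectives, and $\XB$ is a Birkhoff reflective subcategory via \eqref{adjunction-PX}, so the hypotheses of the general theorem are met.

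For the short exact sequence \eqref{ses} and the projective presentation $p\colon (P, \partial_P) \to (X, \partial_X)$, the general theorem yields an exact sequence of the form
\[
H_2(X, \partial_X) \to H_2(Y, \partial_Y) \to \frac{K}{[K, X]_{\PXB}} \to \frac{X}{\langle X, X\rangle} \to \frac{Y}{\langle Y, Y\rangle} \to 0,
\]
where the two rightmost terms are given directly by the description \eqref{Reflection-PX} of the unit of the reflection, $[K, X]_{\PXB}$ denotes the relative commutator of \cite{EG07}, and $H_2$ denotes its associated generalized Hopf formula object (note that $gp$ is still a projective presentation of $(Y,\partial_Y)$ since $g$ is a regular epimorphism). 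Surjectivity of $\overline{g}$ follows from the fact that the left adjoint $G$ of \eqref{adjunction-PX} preserves regular epimorphisms.

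It then remains to identify the three leftmost terms with those in \eqref{SS-PXB}. For the middle term, the remark immediately following Corollary \ref{coro:centralization} identifies the Peiffer commutator $\langle X, K[f]\rangle$ with the relative commutator $[X, K[f]]_{\PXB}$ for any extension $f$; applying this to $f = g$ (so that $K[g] = K$) yields $K/[K, X]_{\PXB} \cong K/\langle K, X\rangle$. For the two $H_2$-terms, the computation carried out at the beginning of this section (leading to the Hopf formula for $\pi_1$) already gives $H_2(X, \partial_X) \cong (K[p] \wedge \langle P, P\rangle)/\langle P, K[p]\rangle$; applying the same computation to the induced projective presentation $gp\colon (P, \partial_P) \to (Y, \partial_Y)$ provides the analogous expression for $H_2(Y, \partial_Y)$.

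The only non-routine ingredient in this plan is the identification of the categorical relative commutator $[K,X]_{\PXB}$ with the Peiffer commutator $\langle K, X \rangle$, and this has already been handled via Theorem \ref{thm:main} together with the remark immediately following Corollary \ref{coro:centralization}. Once that identification is in place, the five-term exact sequence \eqref{SS-PXB} is a direct translation of the general Stallings--Stammbach sequence into the explicit language of the Peiffer commutator; the exactness at each spot is inherited term by term from the general sequence, and no further calculation is needed.
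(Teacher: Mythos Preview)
Your approach is essentially the paper's own: invoke Theorem~4.6 of \cite{EG07} and then identify each term via the Hopf formula computation and the identification of the relative commutator with the Peiffer commutator established just after Corollary~\ref{coro:centralization}. One small correction: $\PXB$ is not semi-abelian, since it is only quasi-pointed (as noted in Section~\ref{sec1}); however, Theorem~4.6 of \cite{EG07} is stated for quasi-pointed exact protomodular categories, so your argument goes through unchanged once you replace ``semi-abelian'' by ``quasi-pointed exact protomodular''.
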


\begin{proof}
This follows from Theorem $4.6$ in \cite{EG07} and the remarks above.
\end{proof}

\begin{rem}
\emph{Observe that, when $B=0$, all Peiffer commutators above coincide with Huq commutators (see \cite{CMM17}) and we recover from the above result the internal version of the classical Stalling-Stammbach theorem: a short exact sequence
\[
\xymatrix{ 0 \ar[r] & K \ar[r]^f & X \ar[r]^g & Y \ar[r] &  0}
\]
in a semi-abelian category \cC\ yields a five-term exact sequence 
\[
\xymatrix{ \frac{K[p]\wedge [ P,P] }{[ P,K[p]]} \ar[r] &  \frac{K[gp]\wedge [ P,P] }{[ P,K[gp]]} \ar[r] & \frac{K}{[ K,X ]} \ar[r] & \frac{X}{[X, X]} \ar[r] & \frac{Y}{[Y, Y ]} \ar[r] & 0 }
\]
(see also \cite{EG07}).}
\end{rem}

\begin{example}\label{Lie}
When $\C$ is the category $\Lie_K$ of Lie algebras over a field $K$, the classical notion of action coincides with the semi-abelian one. Accordingly, a \emph{Lie algebra precrossed module} is given by two Lie algebra homomorphisms $\partial \colon  X\to B$ and $\xi \colon  B\to \mathrm{Der}(X)$, where $\mathrm{Der}(X)$ is the Lie algebra of derivations of $X$, such that $\partial (\xi(b) (x))=[b,\partial (x)]$ for all $x\in X$ and $b\in B$. A \emph{Lie algebra crossed module} \cite{LR} is then a precrossed module where the Peiffer identity
\[
\xi(\partial(x))(y)=[x,y]
\]
holds for all $x,y\in X$. In this case the Peiffer commutator $\langle M,N\rangle$ of two precrossed $B$-submodules of $X$ is the Lie ideal of $X$ generated by the Peiffer elements
\[
[m,n]-\xi(\partial(m))(n)\quad\text{ and }\quad [n,m]-\xi(\partial(n))(m)
\]
where $m\in M$, $n\in N$. In particular, for a morphism
\[
f\colon (\partial\colon X\to B, \xi) \to (\partial'\colon Y\to B,\xi') 
\]
in $\mathbf{PXMod}_{B} (\Lie_K),$ we have $\partial(k)=\partial'f(k)=0$ for all $k\in K[f]$, so that the Peiffer commutator $\langle K[f],X\rangle$ is generated by the terms $[k,x]$ and $\xi(\partial(x))(k)$. It is thus the same ideal as in Example 5 of \cite{DG}, and thus we find the characterization of central extensions given there as a special case of Theorem \ref{thm:main}. Moreover, given a short exact sequence \eqref{ses} in the category of Lie algebra precrossed modules,
we obtain an exact sequence of Lie algebra precrossed modules
\[
H_2(X,\partial_X)\to H_2(Y,\partial_Y) \to \frac{K}{\langle K,X\rangle}\to \frac{X}{\langle X,X\rangle } \to \frac{Y}{\langle Y,Y\rangle} \to 0.
\]
\end{example}

\section{Double central extensions and homology}

Let us denote $\Ext(\PXB)$ the full subcategory of the arrow category of $\PXB$ whose objects are the regular epimorphisms, and $\CExt(\PXB)$ the full subcategory of $\Ext(\PXB)$ whose objects are the central extensions described in Theorem \ref{thm:main}. Then Corollary \ref{coro:centralization} shows that the subcategory $\CExt(\PXB)$ is reflective in $\Ext(\PXB)$, and we write
\[
G^1\colon \Ext(\PXB)\to \CExt(\PXB) 
\]
for the corresponding reflector.

Let us also recall that in any exact Mal'tsev category $\mathcal{A}$, a square of regular epimorphisms
\begin{equation} \label{eq:double_extension}
\xymatrix{
X \ar[r]^g \ar[d]_f & Z\ar[d]^{h} \\ Y \ar[r]_{j}& W
}
\end{equation}
is a pushout if and only if the induced map $X \rightarrow Y \times_W Z$ to the pullback of $h$ and $j$ is also a regular epimorphism (see Theorem 5.7 in \cite{CKP}); a commutative square with this property is often called a regular pushout or a double extension. The latter name is due to the fact that a square \eqref{eq:double_extension} in $\mathcal{A}$ can be seen as an arrow $(g,j)\colon f\to h$ in $\Ext(\mathcal{A})$, that plays the role of an extension between extensions. If we denote by $\E^1$ the class of double extensions, then the property recalled above allows us to prove that, much like regular epimorphisms in $\mathcal{A}$, double extensions are stable under pullback and closed under composition in $\Ext(\mathcal{A})$, and of course every isomorphism of $\Ext(\mathcal{A})$ is a double extension. Together with the subcategory $\CExt(\mathcal{A})$ of central extensions, which is always reflective when $\mathcal{X}$ is a Birkhoff subcategory of $\mathcal A$ as in \eqref{adjunction-JK}, this defines a Galois structure $\Gamma^1$ on $\Ext(\mathcal{A})$. The category $\Ext(\mathcal{A})$ is regular Mal'tsev, but not exact in general; nevertheless, it is still true that the Galois structure $\Gamma^1$ is admissible, and that every double extension is an effective descent morphism (see \cite{E14}). Thus we can again call \emph{trivial} a double extension $(g,j)\colon f\to h$ such that the naturality square
\[\xymatrix{f \ar[r]^{(g,j)} \ar[d]_{\eta^1_f} & h \ar[d]^{\eta^1_h} \\ G^1(f) \ar[r]_{G^1(g,j)}& G^1(h)}\]
is a pullback in $\Ext(\mathcal{A})$. When $\mathcal{A}=\PXB$ and $\mathcal{X}=\XB$, this is equivalent to the square
\[
\xymatrix{
X \ar[r]^{g} \ar[d]_{} & Z \ar[d]^{} \\ \frac{X}{\langle X, K[f]\rangle } \ar[r]_{\overline{g}}& \frac{Z}{\langle Z, K[h]\rangle}
}
\]
being a pullback, where the vertical arrows are the canonical quotients. Then a \emph{double central extension} is a double extension that is ``locally trivial'', i.e.~such that there exists a double extension $(p,q)\colon r\to h$ for which the pullback of $(g,j)$ along $(p,q)$, which is the back face of the cube
\[
\xymatrix{
X \times_Z U \ar[dr]_{p_1} \ar[dd]_{f\times_h r} \ar[rr]_{p_2} & & U \ar[dr]^{p} \ar[dd]^(.6){r} & \\
& X \ar[dd]_(.3){f} \ar[rr]^(.3){g} & & Z \ar[dd]^{h} \\
Y \times_W V \ar[rr]_{p_2} \ar[dr]_{p_1} & & V \ar[dr]_{q} & \\
& Y \ar[rr]_{j} & & W,
}
\]
is a double trivial extension.

Notice that a double extension \eqref{eq:double_extension} can also be seen as an extension $(f,h)\colon g\to j$ ; it turns out that centrality is independent of the orientation, since a double extension is central as an extension $g\to j$ if and only if it is central as an extension $f\to h$ (although this is not true for triviality of double extensions) \cite{E10}.

In \cite{DG}, a characterization of double central extensions for the adjunction \eqref{adjunction-RG} using Smith-Pedicchio commutators was given. This allows to state the corresponding result for the adjunction \eqref{adjunction-PX}: 

\begin{theorem} \label{double-central}  Let $\mathcal C$ be a semi-abelian category satisfying $\rm(UA)$, and let 
\begin{equation} \label{eq:double_ext_PX}
\xymatrix{
(\partial_X\colon X\to B,\xi_X) \ar[d]_f \ar[r]^g & (\partial_Z\colon Z\to B,\xi_Z) \ar[d]^h\\ (\partial_Y\colon Y\to B,\xi_Y) \ar[r]_j & (\partial_W\colon W\to B,\xi_W)
}
\end{equation}
be a double extension in the category $\PXB$. Then \eqref{eq:double_ext_PX} is a double central extension if and only if
\[
\langle K[f]\wedge K[g],X\rangle = 0 = \langle K[f],K[g]\rangle.
\]
\end{theorem}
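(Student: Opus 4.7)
The plan is to transport the characterisation of double central extensions for the adjunction \eqref{adjunction-RG} established in \cite{DG} through the category equivalence $\PXB\simeq\RGB$, arguing in the spirit of Theorem \ref{thm:main}. Write $f_1$, $g_1$, $h_1$, $j_1$ for the morphisms in $\RGB$ corresponding to $f$, $g$, $h$, $j$ under the normalisation equivalence. The result from \cite{DG} applied to the reflection \eqref{adjunction-RG} is a Smith commutator characterisation of the form: the square is double central in $\RGB$ if and only if $Eq[f_1]\wedge Eq[g_1]$ Smith-commutes with $Eq[c]\vee Eq[d]$ in $X_1$ and $Eq[f_1]$ Smith-commutes with $Eq[g_1]$. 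Since $\UA$ implies $\SH$, and the supremum splits binarily as recalled in \eqref{CC}, these conditions translate to the three Huq equalities
\[
[K[f_1]\wedge K[g_1],K[c]]_{Huq}=0=[K[f_1]\wedge K[g_1],K[d]]_{Huq}\quad\text{and}\quad [K[f_1],K[g_1]]_{Huq}=0
\]
inside $X_1$. Since the kernel functor preserves limits, $K[f_1]=K[f]$, $K[g_1]=K[g]$ and $K[f_1]\wedge K[g_1]=K[f]\wedge K[g]$, all viewed as normal sub-precrossed modules of $X=K[d]$.

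The core of the argument is then the translation of these three Huq equalities into the two Peiffer vanishings appearing in the statement. For $\langle K[f],K[g]\rangle=0$ I would invoke Remark \ref{rem:normal}: since $K[f]$ and $K[g]$ are both normal sub-precrossed modules of $X$ they act trivially on each other, and moreover their Peiffer commutator is already normal, hence the Peiffer vanishing is equivalent to $[K[f],K[g]]_{Huq}=0$. For $\langle K[f]\wedge K[g],X\rangle=0$ I would repeat the argument of Theorem \ref{thm:main} verbatim, replacing the normal subobject $K$ there by $K[f]\wedge K[g]$: Proposition \ref{farf} converts this Peiffer vanishing into the Huq-commutativity of $K[f]\wedge K[g]$ with $X=K[d]$ inside $X$ together with the identity $\chi^X_{K[f]\wedge K[g]}=\partial^*\psi$; then the extremal epimorphic cospan $(h,e)$ of Theorem \ref{thm:main} and the axiom $\UA$ upgrade this identity to $\chi^{X_1}_{K[f]\wedge K[g]}=c^*\psi$, which is precisely $[K[f]\wedge K[g],K[c]]_{Huq}=0$ inside $X_1$. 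The converse implications follow as in Theorem \ref{thm:main}, using the preservation of the Peiffer commutator under regular images from Proposition \ref{prop.peiffer}.

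The main obstacle I anticipate is the very first step: extracting from \cite{DG} the precise Smith commutator characterisation of double central extensions for the Birkhoff reflection \eqref{adjunction-RG} and checking that, under $\SH$, it genuinely reduces to the three Huq equalities above rather than to a weaker or stronger combination involving $K[c]\vee K[d]$ indecomposed. A secondary issue is to verify that the intersection $K[f]\wedge K[g]$, which is normal in $X$ by protomodularity of $\PXB$, carries the $B$-action that makes the $\UA$-step go through exactly as in Theorem \ref{thm:main}; once these two points are in place, both directions of the theorem follow from the argument outlined above.
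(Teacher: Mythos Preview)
Your proposal is correct and follows the same overall strategy as the paper: transport the Smith-commutator characterisation of double central extensions from \cite{DG} through the equivalence $\PXB\simeq\RGB$, then translate to Peiffer conditions. The handling of the second condition via Remark \ref{rem:normal} is exactly what the paper does.

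The one genuine difference is in how the first condition is treated. You propose to decompose $[Eq[f_1]\wedge Eq[g_1],Eq[c]\vee Eq[d]]_{Smith}=\Delta_{X_1}$ into the two Huq equalities and then \emph{re-run} the $\UA$ argument from the proof of Theorem \ref{thm:main} with $K$ replaced by $K[f]\wedge K[g]$. The paper instead observes that this Smith condition is, by \eqref{centralRG}, precisely the statement that the comparison map $\langle f_1,g_1\rangle\colon X_1\to Y_1\times_{W_1}Z_1$ is a $\GPD$-central extension; under the equivalence this says $\langle f,g\rangle\colon X\to Y\times_W Z$ is $\XB$-central, and since $K[\langle f,g\rangle]=K[f]\wedge K[g]$, Theorem \ref{thm:main} applied \emph{as a black box} to this single extension gives $\langle K[f]\wedge K[g],X\rangle=0$ directly, for both directions at once. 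This shortcut avoids duplicating the $\UA$ machinery and also makes your somewhat vague ``converse implications follow as in Theorem \ref{thm:main}'' precise: rather than mimicking the pullback-stability argument, you can simply quote Theorem \ref{thm:main} for the comparison extension. Your longer route works, but recognising the first Smith condition as centrality of $\langle f,g\rangle$ is the cleaner move.
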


\begin{proof}
By Corollary 3 of \cite{DG}, the double extension
\[
\xymatrix{
X_1 \ar[r]^{g_1} \ar[d]_{f_1} & Z_1 \ar[d]^{h_1} \\
Y_1 \ar[r]_{j_1} & W_1
}
\]
of reflexive graphs corresponding to \eqref{eq:double_ext_PX} is central if and only if it satisfies the conditions
\[
[Eq[f_1]\wedge Eq[g_1],Eq[c]\vee Eq[d]]_{Smith} = \Delta_{X_1}=[Eq[f_1],Eq[g_1]]_{Smith}.
\]
The equality on the left may be interpreted as requiring that the comparison map $\langle f_1,g_1\rangle \colon X_1\to Y_1 \times_{W_1}Z_1$ is, according to the characterization \eqref{centralRG}, a central extension. By equivalence, the corresponding morphism $\langle f,g\rangle \colon X\to Y \times_{W}Z$ in $\PXB$ is a central extension, which means, by Theorem \ref{thm:main}, that
\[
\langle K[f]\wedge K[g],X\rangle = 0.
\]
Under the $\SH$ condition, the equality on the right is equivalent to the Huq commutator of $K[f]$ and $K[g]$ being trivial in $X_1$. But since $K[f]$ and $K[g]$ are subobjects of $X=K[d]$, this is is equivalent to their Huq commutator being trivial in $X$. This in turn implies that
\[
\langle K[f],K[g] \rangle = 0
\]
by Remark \ref{rem:normal}, since $K[f]$ and $K[g]$ are normal precrossed submodules of $X$.
\end{proof}

As for the characterization of central extensions, by the previous result, we get a description of the reflection of double extensions into the subcategory of double central extensions.

\begin{prop} \label{prop:double.centralization}
The centralization of a double extension as \eqref{eq:double_ext_PX} in $\PXB$ is given by
\begin{equation} \label{diag:double.central}
\xymatrix{
(\overline{\partial}\colon \frac{X}{\langle K[f]\wedge K[g],X\rangle \vee \langle K[f],K[g]\rangle} \to B,\overline{\xi}) \ar[d]_{\overline f} \ar[r]^-{\overline g} & (\partial_Z\colon Z\to B,\xi_Z) \ar[d]^h\\ (\partial_Y\colon Y\to B,\xi_Y) \ar[r]_j & (\partial_W\colon W\to B,\xi_W).
}
\end{equation}
\end{prop}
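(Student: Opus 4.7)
The plan is to mirror the proof of Corollary~\ref{coro:centralization}, substituting Theorem~\ref{double-central} for Theorem~\ref{thm:main} and repeatedly exploiting Proposition~\ref{prop.peiffer}. Set $L := \langle K[f]\wedge K[g],X\rangle \vee \langle K[f],K[g]\rangle$ and let $\eta\colon X \twoheadrightarrow X/L$ be the quotient in $\PXB$.

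First I would verify that $L$ is a normal sub-precrossed module of $X$ satisfying $L\leq K[f]\wedge K[g]$. Normality and containment of the first summand follow from Lemma~\ref{lemma:normal}, since $K[f]\wedge K[g]$ is a normal sub-precrossed module of $X$; for the second summand, the monotonicity of the Peiffer commutator (Proposition~\ref{prop.peiffer}) together with Lemma~\ref{lemma:normal} gives $\langle K[f],K[g]\rangle \leq \langle X, K[g]\rangle \leq K[g]$ and symmetrically $\langle K[f],K[g]\rangle \leq K[f]$, while normality is provided by Remark~\ref{rem:normal}. Because $L\leq K[f]\wedge K[g] = K[\langle f,g\rangle]$, the maps $\overline f$ and $\overline g$ in~\eqref{diag:double.central} are well-defined regular epimorphisms; moreover the induced comparison $X/L\to Y\times_W Z$ is the factorisation through $\eta$ of the regular epi $\langle f,g\rangle\colon X\to Y\times_W Z$, hence itself a regular epi, so~\eqref{diag:double.central} is genuinely a double extension.

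Next I would check the two centrality conditions of Theorem~\ref{double-central} for~\eqref{diag:double.central}. Since $\eta$ is a regular epi with $L\leq K[f]\cap K[g]$, a short diagram chase (using surjectivity of $\eta$) shows that $\eta(K[f]) = K[\overline f]$, $\eta(K[g]) = K[\overline g]$, and $\eta(K[f]\wedge K[g]) = K[\overline f]\wedge K[\overline g]$. Then Proposition~\ref{prop.peiffer} yields
\[
\langle K[\overline f]\wedge K[\overline g],X/L\rangle = \eta\bigl(\langle K[f]\wedge K[g],X\rangle\bigr) = 0, \quad \langle K[\overline f],K[\overline g]\rangle = \eta\bigl(\langle K[f],K[g]\rangle\bigr) = 0,
\]
and Theorem~\ref{double-central} certifies~\eqref{diag:double.central} as a double central extension.

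Finally, for the universal property, I would argue exactly as in Corollary~\ref{coro:centralization}: given a morphism $\varphi$ from~\eqref{eq:double_ext_PX} to a double central extension with top-left object $X'$, factor the component $\varphi_X\colon X\to X'$ in $\C$ as a regular epi $q\colon X\twoheadrightarrow I$ followed by a monomorphism $i\colon I\hookrightarrow X'$. The mono $i$ induces an intermediate double extension on $I$ whose relevant Peiffer commutators are trivial by monotonicity (Proposition~\ref{prop.peiffer}) applied to the centrality of $X'$; applying Proposition~\ref{prop.peiffer} once more to the regular epi $q$ then gives $q(\langle K[f]\wedge K[g],X\rangle) = 0$ and $q(\langle K[f],K[g]\rangle) = 0$, so that $q$, and therefore $\varphi_X$, factors uniquely through $\eta$. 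The main obstacle I expect is the bookkeeping around regular images — confirming that $\eta$ sends $K[f]$, $K[g]$, and $K[f]\wedge K[g]$ to $K[\overline f]$, $K[\overline g]$, and their intersection, and that the intermediate object $I$ carries a well-defined double extension structure over $W$ inheriting centrality from $X'$; once these are in hand the rest is a two-dimensional transcription of the one-dimensional argument.
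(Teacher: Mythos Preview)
Your proposal is correct and follows essentially the same strategy as the paper's proof: verify $L\leq K[f]\wedge K[g]$, show the resulting square is a double central extension by pushing Peiffer commutators through the quotient $\eta$, and establish universality via a regular epi--mono factorisation of the comparison map. The paper's justification of $\eta(K[f]\wedge K[g])=K[\overline f]\wedge K[\overline g]$ is made explicit via a pushout/Noether argument rather than a ``diagram chase'', but the content is the same (it is the lattice correspondence for normal subobjects containing $L$).

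The one substantive difference is in the universal property, exactly at the obstacle you flag. The paper does not attempt to equip a general intermediate $I$ with a double-extension structure: instead it first invokes \cite{E14} to know the reflection exists and then the Im--Kelly result \cite{IK} to conclude that the reflector must fix $Y$, $Z$, $W$. This allows restricting to test morphisms $\phi\colon X\to X'$ that are the identity on the other three corners, so that $f'i\cdot p=f$ and $g'i\cdot p=g$; then the front square on $I$ \emph{is} a double extension (the comparison $I\to Y\times_W Z$ is forced to be a regular epi), and one can appeal to the Galois-theoretic fact that double central extensions are closed under subobjects among double extensions. Your approach---monotonicity of the Peiffer commutator combined with the observation that $\langle M,N\rangle$ is unchanged when computed in $I$ or in $X'$ along the mono $i$---also works and avoids the Im--Kelly citation, but you should be aware that without that reduction your ``intermediate double extension on $I$'' is not literally a double extension (the legs $f'i$, $g'i$ need not be regular epimorphisms), so the argument must be phrased purely in terms of precrossed submodules of $X'$ rather than in terms of an intermediate square.
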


\begin{proof}
Let us first observe that, by Lemma \ref{lemma:normal} and Proposition \ref{prop.peiffer}:
\[
\langle K[f]\wedge K[g],X\rangle \vee \langle K[f],K[g]\rangle \leq K[f]\wedge K[g].
\]
Hence, denoting $J=\langle K[f]\wedge K[g],X\rangle \vee \langle K[f],K[g]\rangle$ and $q\colon X \to X/J$, we have a pushout
\[
\xymatrix{
X \ar@{->>}[d]_{q} \ar@{->>}[r] & \frac{X}{K[f]\wedge K[g]} \ar@{=}[d] \\
\frac{X}{J} \ar@{->>}[r] & \frac{X}{K[f]\wedge K[g]},
}
\]
and as a consequence, by taking kernels horizontally:
\[
q(K[f]\wedge K[g]) =\frac{K[f]\wedge K[g]}{J}=\frac{K[f]}{J}\wedge\frac{K[g]}{J}=q(K[f])\wedge q(K[g]).
\]
We are going to show that the double extension \eqref{diag:double.central} is central by means of the characterization given by Theorem \ref{double-central}:

\begin{align*}
\left\langle K[\overline{f}]\wedge K[\overline{g}],\frac{X}{J}\right\rangle & = \left\langle \frac{K[f]}{J}\wedge \frac{K[g]}{J},\frac{X}{J}\right\rangle = \\
& = \langle q(K[f])\wedge q(K[g]),q(X)\rangle = \\
& = \langle q(K[f] \wedge K[g]),q(X)\rangle = \\
& = q(\langle K[f] \wedge K[g],X\rangle) = 0,
\end{align*}

\begin{align*}
\langle K[\overline{f}], K[\overline{g}]\rangle & = \left\langle \frac{K[f]}{J}, \frac{K[g]}{J}\right\rangle = \\
& = \langle q(K[f]), q(K[g]) \rangle = q(\langle K[f], K[g] \rangle) = 0.
\end{align*}

By the results of \cite{E14}, we know that the category of double central extensions is reflective in the category of double extensions in $\PXMod(\C)/B$. Moreover, by a result due to Im and Kelly \cite{IK}, the reflection must fix all but the ``top object'' (here $X$) of the double extension. To prove the universal property, there is then no restriction in considering an arrow $\phi$ of the form
\[
\xymatrix{
X \ar[dr]^{\phi} \ar[dd]_f \ar[rr]^g & & Z \ar@{=}[dr] \ar[dd] \\
& X' \ar[dd]_(.3){f'} \ar[rr]^(.3){g'} & & Z \ar[dd] \\
Y \ar[rr] \ar@{=}[dr] & & W \ar@{=}[dr] \\
& Y \ar[rr] & & W
}
\]
where the front face is a double central extension. Consider the decomposition $\phi=ip$, where $i$ is a monomorphism and $p$ is a regular epimorphism. Then it induces a diagram
\[
\xymatrix{
X \ar@{->>}[dr]^{p} \ar[dd]_f \ar[rr]^g & & Z \ar@{=}[dr] \ar[dd] \\
& I \ar[dd]_(.3){f' i} \ar[rr]^(.3){g' i} & & Z \ar[dd] \\
Y \ar[rr] \ar@{=}[dr] & & W \ar@{=}[dr] \\
& Y \ar[rr] & & W
}
\]
where the front face is a double extension, since $p$ is a regular epimorphism and the back face is a double extension. Moreover, it is a double central extension since $i$ is a monomorphism and double central extensions are closed under subobjects in double extensions. Then
\begin{align*}
p(J) & = p(\langle K[f]\wedge K[g],X\rangle) \vee p(\langle K[f],K[g]\rangle) \\
& = \langle p(K[f]\wedge K[g]),p(X)\rangle \vee \langle p(K[f]),p(K[g])\rangle \\
& = \langle K[f' i] \wedge K[g' i],I \rangle \vee \langle K[f' i],K[g' i]\rangle \\
& = 0,
\end{align*}
where the first equality follows from the fact that regular images distribute over joins. So $p$ factors through $q$ yielding a commutative triangle of double extensions, which shows that $q$ gives indeed the required reflection.
\end{proof}

%\begin{coro}For a double extension \eqref{eq:double_ext}, let $M$ be the subobject $\langle Ker(f)\wedge Ker(g),X\rangle \vee [Ker(f),Ker(g)]^{X}_{Huq}$. Since $M\leq Ker(f)$ and $M\leq Ker(g)$, we have induced maps $\overline{f}\colon \frac{X}{M}\to Y$, $\overline{g}\colon \frac{X}{M}\to Z$ and $\overline{\partial_X}\colon \frac{X}{M}\to B$; then the square
%\[\xymatrix{(\overline{\partial_X}\colon  X/M \to B,\overline{\xi_X}) \ar[d]_{\overline{f}} \ar[r]^{\overline{g}} & Z \ar[d]^h\\ Y \ar[r]_j & W}\]
%is a double extension, and it is the reflection of \eqref{eq:double_ext} in the subcategory of double central extensions.
%\end{coro}

In particular, if we consider two normal precrossed submodules $(0\colon H \to B,\xi_H)$ and $(0\colon K \to B,\xi_K)$ of a given precrossed module $(\partial\colon X \to B,\xi)$, then the join $H\vee K$ in \cC\ is endowed with a precrossed module structure over $B$, and it is normal in $(\partial\colon X \to B,\xi)$ too (see \cite{CMM17}). One can then consider the double extension
\[
\xymatrix{
(0 \colon H \vee K \to B,\xi_{H\vee K}) \ar[d] \ar[r] & (0 \colon \frac{H\vee K}{H} \to B,\xi_{\frac{H\vee K}{H}}) \ar[d] \\
(0 \colon \frac{H\vee K}{K} \to B,\xi_{\frac{H\vee K}{K}}) \ar[r] & (0 \colon 0 \to B,\tau),
}
\]
and apply Proposition \ref{prop:double.centralization} to this special case, whose centralization is obtained by quotienting out the object
\[
\langle H \wedge K , H \vee K \rangle \vee \langle H , K \rangle = \langle H \wedge K , H \rangle \vee \langle H \wedge K , K \rangle \vee \langle H , K \rangle = \langle H , K \rangle.
\]
Let us observe that $H$ and $K$ act trivially on each other by the action induced by $B$, because their structure maps are zero, whence
\[
\langle H , K \rangle = [H, K]_{Huq}
\]
by Remark 3.12 in \cite{CMM17}.

Finally, slightly enlarging the context of \cite{EVdL} to include quasi-pointed categories, we may say that the centralization just described provides a description of the relative commutator of two normal precrossed submodules with respect to the adjunction \eqref{adjunction-PX}, so that

\[
[H,K]_{\PXB}=(0\colon [H,K]_{Huq}\to B,\xi_{[H,K]}).
\]

\section{{The third homology object}}

{Following the lines of Section 6 in \cite{J08} and} using the characterization of double central extensions we are now going to establish a Hopf formula for the third homology object in $\PXB$, which {specializes in particular to the third integral homology group of a group} \cite{BE}. To this purpose, we assume again that $\PXB$ has enough regular projectives, and we can first define $\pi_2(\partial\colon X\to B, \xi_X)$ as the Galois group of a weakly universal double central extension. To construct such a double extension, we take two projective precrossed modules $(\partial_P\colon P\to B, \xi_P)$ and $(\partial_{P'}\colon {P'}\to B, \xi_{P'})$ and regular epimorphisms $p\colon P\to X$ and $p'\colon P'\to X$; then we form the pullback $P\times_{X}P'$ of $p$ and $p'$, and take a projective precrossed module $(\partial_{Q}\colon {Q}\to B, \xi_{Q})$ with a regular epimorphism $Q\to P\times_{X}P'$. The square
\begin{equation} \label{double-ext}
\xymatrix{
Q \ar@{->>}[r]^{q} \ar@{->>}[d]_{q'} & P' \ar@{->>}[d]^{p'} \\ P \ar@{->>}[r]_{p} & X
}\end{equation}
is then a double extension (in $\PXB$), so that we can see $q\to p$ and $q'\to p'$ as extensions with projective domains in the category $\Ext(\PXB)$. As in the one-dimensional case, the centralisation
\[
\xymatrix{
\frac{Q}{\langle K[q]\wedge K[q'],Q\rangle\vee \langle K[q],K[q']\rangle} \ar@{}[r]|(.8){=} & \overline{Q} \ar@{->>}[r]^-{\overline{q}} \ar@{->>}[d]_{\overline{q'}} & P' \ar@{->>}[d]^{p'} \\
& P \ar@{->>}[r]_{p} & X
}
\]
of this double extension is then a weakly universal double central extension, and we can use it to compute the fundamental group of the extension $p'$ as

\begin{align*}
\pi_1(p')& = K[(\overline{q},p)]\wedge K[\eta^1_{\overline{q'}}] \\
& = K[\overline{q}] \wedge \langle K[\overline{q'}],\overline{Q} \rangle \to 0 \\
& = \frac{K[q]\wedge \langle K[q'],Q\rangle}{\langle K[q]\wedge K[q'],Q\rangle\vee \langle K[q],K[q']\rangle }\to 0
\end{align*}

where the second equality is explained by the following (horizontal) pullback in $\Ext(\PXB)$:
\[
\xymatrix{
& K[\overline{q}] \wedge \langle K[\overline{q'}],\overline{Q} \rangle \ar[dl] \ar[rr] \ar[dd] & & \langle K[\overline{q'}],\overline{Q} \rangle \ar[dd]^{K[\eta^1_{_{\overline{q'}}}]} \ar[dl]_(.7){\ker(\eta^1_{_{\overline{q'}}})} \\
K[\overline{q}] \ar[dd]_{K[(\overline{q},p)]} \ar[rr] & & \overline{Q} \ar[dd]^(.3){\overline{q'}} \\
& 0 \ar[dl] \ar@{=}[rr] & & 0 \ar[dl] \\
K[p] \ar[rr] & & P.
}
\]

Analogously
\[
\pi_1(p)=\frac{K[q']\wedge \langle K[q],Q\rangle}{\langle K[q]\wedge K[q'],Q\rangle\vee \langle K[q],K[q']\rangle }\to 0.
\]
Since $q\colon Q\to P'$ is also a regular epimorphism with projective domain, the fundamental group of $P'$ can be calculated as
\[
\pi_1(P')=\frac{K[q]\wedge \langle Q,Q\rangle}{\langle K[q],Q\rangle},
\]
but since $P'$ is projective, this fundamental group must be trivial, which implies that $K[q]\wedge \langle Q,Q \rangle=\langle K[q],Q\rangle$. By analogy, we must also have that $K[q']\wedge \langle Q,Q\rangle=\langle K[q'],Q\rangle$, and as a consequence, we obtain
\[
\pi_1(p)=\frac{K[q']\wedge K[q]\wedge \langle Q,Q\rangle}{\langle K[q]\wedge K[q'],Q\rangle\vee \langle K[q],K[q']\rangle }\to 0=\pi_1(p').
\]
Since this must be true for any $p$ and $p'$, and $\pi_1(p)$ and $\pi_1(p')$ only depend on $p$ and $p'$ respectively, $\pi_1(p)$ only depends on its codomain $(\partial \colon X\to B,\xi)$; thus we can define $\pi_2(\partial\colon X\to B,\xi)$ as the domain of $\pi_1(p)$, and this gives us the Hopf formula
\begin{equation} \label{double-Hopf}
H_3 (X, \partial ) = \pi_2(X, \partial )=\frac{K[q']\wedge K[q]\wedge \langle Q,Q\rangle}{\langle K[q]\wedge K[q'],Q\rangle\vee \langle K[q],K[q']\rangle},
\end{equation}
which is independent of the chosen double extension \eqref{double-ext}.
%These turn out to be isomorphic, so that we can define $\pi_2(X)$ as $\pi_1(p\colon P\to X)$, which is then given by another Hopf formula.

\end{document}